\documentclass[11pt,a4paper]{article}
\usepackage[latin1]{inputenc}
\usepackage{amsmath}
\usepackage{amsthm}
\usepackage{amsfonts}
\usepackage{amsfonts,amsthm,latexsym,amsmath,amssymb,amscd,epsfig,psfrag,enumerate}
\usepackage{graphics,graphicx, bezier, float, color, hyperref}
\usepackage{amssymb,url}
\usepackage{multienum}
\usepackage[table]{xcolor}
\usepackage{multicol,multirow}
\usepackage{graphicx}
\usepackage{fancyvrb}
\usepackage{parskip}
\usepackage[toc,page]{appendix}
\usepackage[top=2.7cm, bottom=2.7cm, left=1.5cm, right=1.5cm]{geometry}
\usepackage{xcolor}
\usepackage{blkarray}
\newtheorem{theorem}{Theorem}[section]
\newtheorem{lemma}{Lemma}[section]
\newtheorem{cor}{Corollary}[section]

\usepackage[none]{hyphenat}[section]

\numberwithin{equation}{section}
\numberwithin{table}{section}
\numberwithin{figure}{section}
\title{$k$-Fibonacci and $k$-Lucas numbers with the H\"older inequality}
\author{Herbert Batte$^{1}$ and Prosper Kaggwa $^{2,*}$}
\date{}
\begin{document}
	\maketitle
\begin{abstract}
	Dujella, Jak\v seti\'c and Pe\v cari\'c recently established in \cite{DJP}, a chain of power-sum inequalities, built from H\"older's and Cauchy's inequalities and their converse forms, and applied it to the Fibonacci sequence using the identities $\sum_{i=1}^nF_i^2=F_nF_{n+1}$ and $\sum_{i=1}^nF_iF_{i+1}=F_{n+1}^2-\frac{1+(-1)^n}{2}$. We show that this machinery applies uniformly across the one-parameter family of $k$-Fibonacci and $k$-Lucas numbers of Falc\'on and Plaza, via the generalized identities
	\begin{align*}
	\sum_{i=1}^nL_{k,i}^2=\frac{L_{k,n}L_{k,n+1}-2k}{k},\qquad
	\sum_{i=1}^nF_{k,i}^2=\frac{F_{k,n}F_{k,n+1}}{k},
	\end{align*}
	and
	\begin{align*}
	\sum_{i=1}^nL_{k,i}L_{k,i+1}=\frac{L_{k,n+1}^2}{k}-k+\bigl((-1)^n-1\bigr)\left(\frac{2}{k}+\frac{k}{2}\right),
\end{align*}
which recover the classical Fibonacci and Lucas identities at $k=1$. We further exploit the cross-identity $F_{k,i}L_{k,i}=F_{k,2i}$, valid for every $k\ge1$, to obtain a Cauchy-Schwarz-type inequality linking the two families that has no counterpart in the Fibonacci-only setting. At $k=1$, this specializes to a fully explicit elementary inequality between ordinary Fibonacci and Lucas numbers, alongside the corresponding H\"older and Cauchy-conversion refinements we obtain for ordinary Lucas numbers.
\end{abstract}
	
	{\bf Keywords and phrases}: $k$-Fibonacci numbers, $k$-Lucas numbers, H\"older inequality, Cauchy inequality, power sums
	
	{\bf 2020 Mathematics Subject Classification}: 11B39, 26D15.
	
	\thanks{$ ^{*} $ Corresponding author}
	
	\section{Introduction}

\subsection{Background}\label{sec:background}

The Fibonacci sequence
\begin{align*}
	F_0=0,\quad F_1=1,\quad F_{n}=F_{n-1}+F_{n-2}, \qquad \text{for}\qquad n\ge 2,
\end{align*} 
and its companion, the Lucas sequence 
\begin{align*}
	L_0=2,\quad L_1=1,\quad L_{n}=L_{n-1}+L_{n-2}, \qquad \text{for}\qquad n\ge 2,
\end{align*}  
are among the most studied objects in elementary Number theory, and a substantial literature is devoted to inequalities involving them. Among the classical tools used to produce such inequalities are the H\"older and Cauchy-Schwarz inequalities. Given $p>1$ and its conjugate exponent $q=p/(p-1)$, and positive reals $x_1,\dots,x_n,\,y_1,\dots,y_n$, H\"older's inequality states that
\begin{align*}
\sum_{i=1}^n x_iy_i\le\Bigl(\sum_{i=1}^n x_i^p\Bigr)^{1/p}\Bigl(\sum_{i=1}^n y_i^q\Bigr)^{1/q},
\end{align*} 
with the Cauchy-Schwarz inequality as the special case $p=q=2$. Less familiar, but equally classical, are the \emph{converse} forms of these inequalities (due to Diaz and Metcalf, and to P\'olya and Szeg\H o), which bound the same quantities from below (or above, depending on the direction) once the ratios $x_i/y_i^{q/p}$ are known to lie in a fixed interval $[m,M]$.

To organize inequalities of this type, it is convenient to work with the power sum
\begin{align*}
S_n^{[\alpha]}(x)=\sum_{i=1}^n x_i^{\alpha},\qquad \alpha\in\mathbb{R},\ x=(x_1,\dots,x_n)\in\mathbb{R}_+^n,
\end{align*} 
following Dujella, Jak\v seti\'c and Pe\v cari\'c \cite{DJP}. Chaining H\"older's inequality with Jensen's inequality and the monotonicity of the power means, they obtained, for $x_i\ge1$ and $\alpha=u/p+v/q\ge\beta>0$,
\begin{align*}
\bigl(S_n^{[u]}(x)\bigr)^{1/p}\bigl(S_n^{[v]}(x)\bigr)^{1/q}\ \ge\ S_n^{[\alpha]}(x)\ \ge\ \frac{1}{n^{\alpha/\beta-1}}\bigl(S_n^{[\beta]}(x)\bigr)^{\alpha/\beta}\ \ge\ S_n^{[\beta]}(x)\ \ge\ \bigl(S_n^{[\alpha]}(x)\bigr)^{\beta/\alpha},
\end{align*} 
together with companion chains for $0<\alpha<\beta$ and for the reversed H\"older inequality ($0<p<1$), and sharper, constant-tracking versions of the same chains obtained from the converse H\"older and converse Cauchy inequalities. The point of \cite{DJP} is that this machinery is entirely agnostic to what the sequence $x$ is. Any positive sequence for which $S_n^{[\beta]}(x)$ has a closed form at some fixed $\beta$ immediately yields an inequality chain for $S_n^{[\alpha]}(x)$ at every other exponent $\alpha$. Applying it to the Fibonacci sequence via the classical identities
\begin{equation}\label{eq:fib-identities}
	\sum_{i=1}^nF_i^2=F_nF_{n+1},\qquad \sum_{i=1}^nF_iF_{i+1}=F_{n+1}^2-\frac{1+(-1)^n}{2},
\end{equation}
they recovered, and refined, an earlier inequality of Alzer and Luca \cite{AlzerLuca},
\begin{align*}
\sum_{j=1}^nF_j^{r}\sum_{j=1}^nF_j^{s}\ge\bigl(F_nF_{n+1}\bigr)^2,\qquad r+s\ge4,
\end{align*} 
and produced several further chains of inequalities for sums of powers of Fibonacci numbers.

Since the underlying machinery of \cite{DJP} never uses any special property of the Fibonacci sequence beyond positivity and the existence of the two closed-form identities above, it is natural to ask how these results behave when the Fibonacci sequence is replaced by a genuine \emph{family} of sequences depending on a parameter, rather than by a single fixed sequence such as the Lucas numbers. This is the question we take up here, using the $k$-Fibonacci and $k$-Lucas numbers of Falc\'on and Plaza as the relevant family.

For a positive integer $k$, the \emph{$k$-Fibonacci numbers} and \emph{$k$-Lucas numbers} are defined by the second-order linear recurrences
\begin{align*}
F_{k,0}=0,\quad F_{k,1}=1,\quad F_{k,n+1}=kF_{k,n}+F_{k,n-1}\quad (n\ge 1),
\end{align*} 
and
\begin{align*}
L_{k,0}=2,\quad L_{k,1}=k,\quad L_{k,n+1}=kL_{k,n}+L_{k,n-1}\quad (n\ge 1),
\end{align*} 
following Falc\'on and Plaza \cite{falconplaza} and Falc\'on \cite{falconlucas}. Writing
\begin{align*}
\sigma_1=\frac{k+\sqrt{k^2+4}}{2},\qquad \sigma_2=\frac{k-\sqrt{k^2+4}}{2},
\end{align*} 
so that $\sigma_1+\sigma_2=k$ and $\sigma_1\sigma_2=-1$, the sequences admit the Binet-type formulas
\begin{align*}
F_{k,n}=\frac{\sigma_1^n-\sigma_2^n}{\sigma_1-\sigma_2},\qquad L_{k,n}=\sigma_1^n+\sigma_2^n.
\end{align*} 
The case $k=1$ recovers the classical Fibonacci numbers $F_n$ and Lucas numbers $L_n$; the case $k=2$ recovers the Pell and Pell-Lucas numbers.

\subsection{Main results}

We first establish the identities that will play the role of \eqref{eq:fib-identities} above for the $k$-Fibonacci and $k$-Lucas sequences. Throughout, set
\begin{align*}
D_k(n):=\frac{L_{k,n}L_{k,n+1}-2k}{k},\qquad P_k(n):=\frac{L_{k,n+1}^2}{k}-k+\bigl((-1)^n-1\bigr)\left(\frac{2}{k}+\frac{k}{2}\right).
\end{align*} 

\begin{theorem}\label{thm:identities}
	For every positive integer $k$ and every $n\ge1$,
	\begin{align*}
	\sum_{i=1}^nL_{k,i}^2=D_k(n),\qquad
	\sum_{i=1}^nF_{k,i}^2=\frac{F_{k,n}F_{k,n+1}}{k},\qquad
	\sum_{i=1}^nL_{k,i}L_{k,i+1}=P_k(n),
\end{align*} 
	and for every $i\ge1$,
	\[
	F_{k,i}L_{k,i}=F_{k,2i}.
	\]
	All four identities reduce, at $k=1$, to the corresponding classical Fibonacci or Lucas identities.
\end{theorem}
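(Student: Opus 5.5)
The plan is to prove every identity directly from the Binet formulas. The only inputs are $\sigma_1+\sigma_2=k$, $\sigma_1\sigma_2=-1$ and $(\sigma_1-\sigma_2)^2=k^2+4$. Each square or consecutive product will be linearized into a single $k$-Lucas number of doubled index plus a sign term. The resulting sums of $L_{k,j}$ over indices of one parity then telescope.

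The first step is to record the linearizations, each obtained by expanding the Binet formulas:
\begin{align*}
L_{k,i}^2&=L_{k,2i}+2(-1)^i,\qquad (k^2+4)F_{k,i}^2=L_{k,2i}-2(-1)^i,\\
L_{k,i}L_{k,i+1}&=L_{k,2i+1}+(-1)^ik,\qquad F_{k,i}L_{k,i}=\frac{\sigma_1^{2i}-\sigma_2^{2i}}{\sigma_1-\sigma_2}=F_{k,2i}.
\end{align*}
For the third one, the cross terms are $\sigma_1^i\sigma_2^{i+1}+\sigma_2^i\sigma_1^{i+1}=(\sigma_1\sigma_2)^i(\sigma_1+\sigma_2)=(-1)^ik$. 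The last line is already the fourth identity of Theorem~\ref{thm:identities}. The same Binet argument also gives the companion formula $(k^2+4)F_{k,n}F_{k,n+1}=L_{k,2n+1}-(-1)^nk$.

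The second step is the telescoping lemma. Since $L_{k,j+1}-L_{k,j-1}=kL_{k,j}$, summing over $j=2,4,\dots,2n$ gives
\[
k\sum_{i=1}^nL_{k,2i}=L_{k,2n+1}-L_{k,1}=L_{k,2n+1}-k.
\]
Summing over $j=3,5,\dots,2n+1$ gives
\[
k\sum_{i=1}^nL_{k,2i+1}=L_{k,2n+2}-L_{k,2}=L_{k,2n+2}-k^2-2.
\]

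The third step is the assembly. Write $\varepsilon_n=\sum_{i=1}^n(-1)^i$, which equals $\frac{(-1)^n-1}{2}$.
\begin{itemize}
\item For the Lucas squares, $k\sum_{i=1}^n L_{k,i}^2=L_{k,2n+1}-k+2k\varepsilon_n$. Substituting $L_{k,2n+1}=L_{k,n}L_{k,n+1}-(-1)^nk$ gives $L_{k,n}L_{k,n+1}-2k$, after checking both parities of $n$. This is $D_k(n)$.
\item For the Fibonacci squares, $k(k^2+4)\sum_{i=1}^n F_{k,i}^2=L_{k,2n+1}-k-2k\varepsilon_n=L_{k,2n+1}-(-1)^nk$. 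By the companion formula this equals $(k^2+4)F_{k,n}F_{k,n+1}$.
\item For the consecutive Lucas products, the sum equals $\frac{L_{k,2n+2}-k^2-2}{k}+k\varepsilon_n$. Substituting $L_{k,2n+2}=L_{k,n+1}^2+2(-1)^n$ leaves a term $\frac{2(-1)^n-2}{k}-k+\frac{k((-1)^n-1)}{2}$, which is exactly the remainder in $P_k(n)$.
\end{itemize}

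I expect this parity bookkeeping in the last identity to be the only real obstacle. I would double-check it by direct evaluation at $n=1,2$ with $k=1$: this gives $3$ and $15$, matching $P_1(1)=9-1-5=3$ and $P_1(2)=16-1=15$. As a fallback, induction on $n$ using the recurrence also works.

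Finally, the reduction at $k=1$ is a direct substitution. It gives $\sum_{i=1}^n L_i^2=L_nL_{n+1}-2$, $\sum_{i=1}^n F_i^2=F_nF_{n+1}$, $F_iL_i=F_{2i}$, and
\[
\sum_{i=1}^n L_iL_{i+1}=L_{n+1}^2-1+\tfrac52\bigl((-1)^n-1\bigr),
\]
which is the classical Lucas companion of \eqref{eq:fib-identities}.
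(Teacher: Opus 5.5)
Your proof is correct. For the third and fourth identities it coincides with the paper's argument: Lemma~\ref{lem:product}, the odd-index telescoping $kL_{k,2i+1}=L_{k,2i+2}-L_{k,2i}$, Lemma~\ref{lem:squares} at $m=n+1$, and the Binet computation for $F_{k,i}L_{k,i}=F_{k,2i}$.

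The genuine difference is in the two sums of squares. You linearize $L_{k,i}^2$ and $(k^2+4)F_{k,i}^2$ into $L_{k,2i}\pm2(-1)^i$ and telescope the even-index Lucas sum. You then have to convert $L_{k,2n+1}$ back into a product, using $L_{k,2n+1}=L_{k,n}L_{k,n+1}-(-1)^nk$ and the companion $(k^2+4)F_{k,n}F_{k,n+1}=L_{k,2n+1}-(-1)^nk$.

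The paper instead multiplies the step identity $L_{k,i+1}-L_{k,i-1}=kL_{k,i}$ by $L_{k,i}$. This gives $kL_{k,i}^2=L_{k,i}L_{k,i+1}-L_{k,i-1}L_{k,i}$, which telescopes directly to $(L_{k,n}L_{k,n+1}-L_{k,0}L_{k,1})/k$. The $F_{k,\cdot}$ case is identical, with boundary term $F_{k,0}F_{k,1}=0$. That route needs no Binet formula, no factor $k^2+4$, and no parity bookkeeping. It also works verbatim for any solution of $x_{i+1}=kx_i+x_{i-1}$, changing only the boundary term $x_0x_1$.

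Your route buys uniformity: all three sums go through a single linearize-and-telescope pipeline. Along the way you also obtain $\sum_{i=1}^nL_{k,2i}=(L_{k,2n+1}-k)/k$, which is \eqref{eq:sumLeven}.

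Incidentally, your Lucas-squares step needs no check over both parities of $n$. The cancellation $-(-1)^nk-k+k\bigl((-1)^n-1\bigr)=-2k$ holds identically.
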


Feeding the first three identities of Theorem \ref{thm:identities}, together with the general power-sum theorems of \cite{DJP}, into the sequences $x_i=L_{k,i}$ (with $\beta=2$) and $x_i=L_{k,i}L_{k,i+1}$ (with $\alpha=1$) yields the following $k$-dependent inequality chains.

\medskip

\begin{theorem}[$k$-Lucas H\"older sandwich]\label{thm:main1}
	Let $k\ge1$ be an integer, let $n\ge1$, and let $u,v\in\mathbb{R}$.
	\begin{enumerate}[(i)]
		\item If $p>1$, $q=p/(p-1)$, and $\alpha=u/p+v/q\ge2$, then
		\begin{align*}
			\Bigl(\sum_{i=1}^nL_{k,i}^u\Bigr)^{1/p}\Bigl(\sum_{i=1}^nL_{k,i}^v\Bigr)^{1/q}\ge\sum_{i=1}^nL_{k,i}^\alpha\ge\frac{1}{n^{\alpha/2-1}}D_k(n)^{\alpha/2}\ge D_k(n)\ge\Bigl(\sum_{i=1}^nL_{k,i}^\alpha\Bigr)^{2/\alpha}.
		\end{align*}
		\item If $p>1$, $q=p/(p-1)$, and $0\le\alpha=u/p+v/q<2$, then
		\begin{align*}
			\Bigl(\sum_{i=1}^nL_{k,i}^u\Bigr)^{1/p}\Bigl(\sum_{i=1}^nL_{k,i}^v\Bigr)^{1/q}\ge\sum_{i=1}^nL_{k,i}^\alpha\ge D_k(n)^{\alpha/2}.
		\end{align*}
		\item If $0<p<1$, $q=p/(p-1)$, and $\alpha=u/p+v/q\ge2$, then
		\begin{align*}
			\Bigl(\sum_{i=1}^nL_{k,i}^u\Bigr)^{1/p}\Bigl(\sum_{i=1}^nL_{k,i}^v\Bigr)^{1/q}\le\sum_{i=1}^nL_{k,i}^\alpha\le D_k(n)^{\alpha/2}.
		\end{align*}
		\item If $0<p<1$, $q=p/(p-1)$, and $0<\alpha=u/p+v/q<2$, then
		\begin{align*}
			\Bigl(\sum_{i=1}^nL_{k,i}^u\Bigr)^{1/p}\Bigl(\sum_{i=1}^nL_{k,i}^v\Bigr)^{1/q}\le\sum_{i=1}^nL_{k,i}^\alpha\le\frac{1}{n^{\alpha/2-1}}D_k(n)^{\alpha/2}\le D_k(n)\le\Bigl(\sum_{i=1}^nL_{k,i}^\alpha\Bigr)^{2/\alpha}.
		\end{align*}
		\end{enumerate}
		At $\alpha=0$ the terminal exponent $2/\alpha$ in part (iv) is undefined; there $S_n^{[0]}(x)=n$ for any positive sequence $x$, and the first three members of the chain in part (iv) remain valid with $D_k(n)^0=1$, but the last term must be omitted.
	\end{theorem}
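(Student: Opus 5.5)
The plan is to apply, for each part, the general power-sum chains of \cite{DJP} to the positive sequence $x_i=L_{k,i}$ with $\beta=2$, and then to replace $S_n^{[2]}(x)$ by the closed form $D_k(n)$ from Theorem \ref{thm:identities}. To keep the argument self-contained, I will re-derive each link of the chains from three ingredients: (direct or reverse) H\"older, monotonicity of power means, and monotonicity of $\ell^r$-norms in $r$. First I record the positivity facts. Since $L_{k,0}=2$, $L_{k,1}=k\ge1$ and $L_{k,n+1}=kL_{k,n}+L_{k,n-1}$, induction gives $L_{k,i}\ge1$ for every $i\ge1$. Consequently $D_k(n)=\sum_{i=1}^nL_{k,i}^2\ge n$; this inequality is the only place where the hypothesis $x_i\ge1$ of \cite{DJP} is used.

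\textbf{First link (all parts).} Write $L_{k,i}^\alpha=L_{k,i}^{u/p}\cdot L_{k,i}^{v/q}$ and apply H\"older with exponents $p,q$ to $a_i=L_{k,i}^{u/p}$, $b_i=L_{k,i}^{v/q}$. Since $a_i^p=L_{k,i}^u$ and $b_i^q=L_{k,i}^v$, for $p>1$ this gives
\[
S_n^{[\alpha]}\le\bigl(S_n^{[u]}\bigr)^{1/p}\bigl(S_n^{[v]}\bigr)^{1/q}.
\]
For $0<p<1$ we have $q<0$, and the reverse H\"older inequality gives the opposite inequality. This yields the first link in parts (i)--(iv).

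\textbf{Second link.} Compare the power means $M_\alpha=\bigl(\tfrac1n S_n^{[\alpha]}\bigr)^{1/\alpha}$ and $M_2=\bigl(\tfrac1n D_k(n)\bigr)^{1/2}$. For $\alpha\ge2$ we have $M_\alpha\ge M_2$, which rearranges to $S_n^{[\alpha]}\ge n^{1-\alpha/2}D_k(n)^{\alpha/2}$; this is the second link of (i). For $0<\alpha<2$ the inequality reverses, giving the second link of (iv).

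\textbf{Third link of (i) and (iv).} The inequality $n^{1-\alpha/2}D_k(n)^{\alpha/2}\gtrless D_k(n)$ is equivalent to $(D_k(n)/n)^{\alpha/2-1}\gtrless1$. Because $D_k(n)\ge n$, this quantity is $\ge1$ when $\alpha>2$ and $\le1$ when $\alpha<2$, which is exactly the required direction in each case. At $\alpha=2$ equality holds.

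\textbf{Norm comparisons.} The remaining links come from monotonicity of $\|x\|_r=\bigl(\sum_{i=1}^n x_i^r\bigr)^{1/r}$, which is nonincreasing in $r>0$. For $\alpha\ge2$ we get $\|x\|_\alpha\le\|x\|_2$, that is, $\bigl(S_n^{[\alpha]}\bigr)^{2/\alpha}\le D_k(n)$ and $S_n^{[\alpha]}\le D_k(n)^{\alpha/2}$. These give the last link of (i) and the second link of (iii). For $0<\alpha<2$ we get $\|x\|_\alpha\ge\|x\|_2$, which gives the second link of (ii) and the last link of (iv). In (ii) with $\alpha=0$ the claim reads $n\ge1$, which is trivial. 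The $\alpha=0$ remark in (iv) follows in the same way: $S_n^{[0]}=n$ and $n^{1}D_k(n)^0=n\le D_k(n)$.

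I expect no deep obstacle here; the main care is bookkeeping of inequality directions. Two points need checking in particular:
\begin{enumerate}[(a)]
\item the reverse H\"older inequality with $0<p<1$ and negative $q$ is applied in the correct form, with all $L_{k,i}>0$, so that negative powers are legitimate;
\item the sign of the exponent $\alpha/2-1$ in the third link is matched correctly with $D_k(n)\ge n$.
\end{enumerate}
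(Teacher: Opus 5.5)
Your proposal is correct and follows the same route as the paper. Like the paper, you instantiate the chains of Theorems~\ref{thm:djp6} and~\ref{thm:djp7} at $x_i=L_{k,i}$, $\beta=2$, and substitute $S_n^{[2]}=D_k(n)$. The only difference is that you re-derive each link (H\"older and reverse H\"older, the power-mean comparison, monotonicity of $r\mapsto\|x\|_r$, and $D_k(n)\ge n$) instead of citing them, which makes explicit that $x_i\ge1$ is used only in the third link.

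A small bonus of doing it by hand is that you actually verify the $\alpha=0$ remark in part (iv). The paper's proof sets that case aside, because its restated Theorem~\ref{thm:djp7}(ii) only covers $0<\alpha<\beta$.
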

\medskip
\begin{theorem}[Converse-H\"older refinement]\label{thm:main2}
	Let $k\ge1$ be an integer, $p>1$, $q=p/(p-1)$, $n\ge2$, and let $u,v\in\mathbb{R}$ with $u/p+v/q=2$ and $u\ne v$. Set
\begin{align*}
m=\min\bigl\{L_{k,1}^{(u-v)/p},\,L_{k,n}^{(u-v)/p}\bigr\},\qquad M=\max\bigl\{L_{k,1}^{(u-v)/p},\,L_{k,n}^{(u-v)/p}\bigr\}.
\end{align*}
By Lemma \ref{lem:monotone} and $u\ne v$, $0<m<M$. Then
\begin{align*}
(M-m)\sum_{i=1}^nL_{k,i}^u+(mM^p-Mm^p)\sum_{i=1}^nL_{k,i}^v\le(M^p-m^p)\,D_k(n),
\end{align*}
and
\begin{align*}
\Bigl(\sum_{i=1}^nL_{k,i}^u\Bigr)^{1/p}\Bigl(\sum_{i=1}^nL_{k,i}^v\Bigr)^{1/q}\le\lambda\,D_k(n),
\end{align*}
where
$\lambda=\bigl|M^p-m^p\bigr|\,\bigl|p(M-m)\bigr|^{-1/p}\,\bigl|q(mM^p-Mm^p)\bigr|^{-1/q}$.
If instead $0<p<1$, both inequalities reverse.
\end{theorem}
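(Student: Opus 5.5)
The plan is to use the converse H\"older inequality (Diaz--Metcalf type) with a suitable factorization of $L_{k,i}^2$, and then substitute the closed form $\sum L_{k,i}^2=D_k(n)$ from Theorem \ref{thm:identities}.

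\textbf{Factorization.} Put $a_i=L_{k,i}^{u/p}$ and $b_i=L_{k,i}^{v/q}$. Then $a_ib_i=L_{k,i}^{u/p+v/q}=L_{k,i}^2$, $a_i^p=L_{k,i}^u$ and $b_i^q=L_{k,i}^v$. The ratio relevant for the converse H\"older inequality is
\[
\frac{a_i}{b_i^{q/p}}=L_{k,i}^{u/p-v/p}=L_{k,i}^{(u-v)/p}.
\]
By Lemma \ref{lem:monotone}, $L_{k,i}$ is monotone in $i$; I expect it to be increasing for $i\ge1$, with $L_{k,1}=k\ge1$, and the only tie should be at $k=1$, where $L_1<L_2$ still holds. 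Since $t\mapsto t^{(u-v)/p}$ is monotone on $(0,\infty)$, every ratio lies between its values at $i=1$ and $i=n$, that is, in $[m,M]$. Because $u\ne v$ and $L_{k,1}<L_{k,n}$ for $n\ge2$, we get $0<m<M$.

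\textbf{Linear form.} For $p>1$ I would prove the inequality directly rather than cite it. Set $t_i=a_i/b_i^{q/p}\in[m,M]$. The function $\phi(t)=t^p$ is convex, so on $[m,M]$ it lies below its chord:
\[
t^p\le \frac{(M^p-m^p)t-(mM^p-Mm^p)}{M-m}.
\]
Take $t=t_i$ and multiply by $(M-m)b_i^q$. Using $t_i^pb_i^q=a_i^p$ and $t_ib_i^q=a_ib_i^{q-q/p}=a_ib_i$ (because $q-q/p=1$), this becomes
\[
(M-m)a_i^p+(mM^p-Mm^p)b_i^q\le (M^p-m^p)a_ib_i.
\]
Summing over $i$ and substituting $\sum a_ib_i=\sum L_{k,i}^2=D_k(n)$ gives the first inequality. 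For $0<p<1$, $t^p$ is concave, so the chord inequality reverses and so does the summed inequality.

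\textbf{Product form.} The second inequality follows from the first by weighted AM--GM (Young's inequality) in the form $A^{1/p}B^{1/q}\le \frac{A}{p}+\frac{B}{q}$, applied after rescaling. Write $\alpha=M-m>0$ and $\beta=mM^p-Mm^p$; note $\beta>0$ when $p>1$, which is equivalent to $M^{p-1}>m^{p-1}$. Then
\[
\alpha A+\beta B=(p\alpha)\frac{A}{p}+(q\beta)\frac{B}{q}\ge (p\alpha)^{1/p}(q\beta)^{1/q}A^{1/p}B^{1/q},
\]
which is Young's inequality applied to $(p\alpha A)^{1/p}(q\beta B)^{1/q}$. Combining this with $\alpha A+\beta B\le (M^p-m^p)D_k(n)$ yields the bound with $\lambda$.

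\textbf{Main obstacle: the case $0<p<1$.} Here $q<0$ and $\beta=mM^p-Mm^p<0$, so the signs must be handled carefully; this is why $\lambda$ is written with absolute values. The tool is the reversed Young inequality $A^{1/p}B^{1/q}\ge A/p+B/q$, valid for $0<p<1$. One checks that the factors $p\alpha>0$ and $q\beta>0$ (a negative times a negative) keep the same absolute-value form of $\lambda$, and that the direction of the final inequality flips consistently with the reversed linear form. As a fallback, I would cite the general converse theorems of \cite{DJP}, which are stated exactly with this $\lambda$, and simply check the hypotheses $x_i>0$ and $a_i/b_i^{q/p}\in[m,M]$ as above.
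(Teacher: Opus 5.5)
Your proof is correct. For $p>1$ your product-form step is exactly the paper's: both derive the $\lambda$-bound from the linear form by weighted AM--GM applied to $X=p(M-m)\sum L_{k,i}^u$ and $Y=q(mM^p-Mm^p)\sum L_{k,i}^v$.

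The difference is in the linear inequality. The paper does not prove it; it cites Theorem~\ref{thm:djp8} with $x_i=L_{k,i}$ and $\alpha=2$, spends its effort verifying $0<m<M$, and for $0<p<1$ invokes the reversed clause of that theorem for both inequalities. You prove the Diaz--Metcalf linear inequality from scratch. You factor $L_{k,i}^2=a_ib_i$ with $a_i=L_{k,i}^{u/p}$ and $b_i=L_{k,i}^{v/q}$, bound $t_i^p$ by its chord on $[m,M]$, and use $t_i^pb_i^q=a_i^p$ and $t_ib_i^q=a_ib_i$ (since $q-q/p=1$). This makes the argument self-contained. It also shows that only $t_i\in[m,M]$ is needed, so reading $m,M$ off the endpoints follows immediately from monotonicity.

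Your $0<p<1$ case also gives more than the paper does, and the step you leave as ``one checks'' goes through:
\begin{itemize}
\item concavity reverses the chord bound;
\item $p(M-m)>0$, and $q(mM^p-Mm^p)>0$ as a product of two negatives;
\item reversed Young gives $X^{1/p}Y^{1/q}\ge X/p+Y/q=(M-m)\sum L_{k,i}^u+(mM^p-Mm^p)\sum L_{k,i}^v\ge(M^p-m^p)D_k(n)$, i.e.\ $\bigl(\sum L_{k,i}^u\bigr)^{1/p}\bigl(\sum L_{k,i}^v\bigr)^{1/q}\ge\lambda D_k(n)$.
\end{itemize}
This also explains the absolute values in $\lambda$. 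The paper's route gains only brevity.

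Two cosmetic points. Your remark about a ``tie at $k=1$'' is inaccurate: there is no tie for $i\ge1$. The $k=1$ failure is $L_{1,0}>L_{1,1}$, and the only tie is $L_{2,0}=L_{2,1}$ at $k=2$; both sit at index $0$ and are irrelevant here. Also, calling $M-m$ and $mM^p-Mm^p$ ``$\alpha,\beta$'' clashes with the paper's exponent notation.
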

\medskip
\begin{theorem}[Cauchy-conversion refinement]\label{thm:main3}
	Let $k\ge1$ be an integer, let $n\ge1$, and let $u,v\in\mathbb{R}$ with $u+v=2$, and set $x_i=L_{k,i}L_{k,i+1}$,
	\begin{align*}
	m_1&=\min\{x_1^{u/2},x_n^{u/2}\},\quad M_1=\max\{x_1^{u/2},x_n^{u/2}\},\\
	m_2&=\min\{x_1^{v/2},x_n^{v/2}\},\quad M_2=\max\{x_1^{v/2},x_n^{v/2}\}.
	\end{align*}
	Then
	\begin{align*}
	1\ \le\ \frac{\bigl(\sum_{i=1}^nx_i^u\bigr)\bigl(\sum_{i=1}^nx_i^v\bigr)}{P_k(n)^2}\ &\le\ \frac14\left(\sqrt{\frac{M_1M_2}{m_1m_2}}+\sqrt{\frac{m_1m_2}{M_1M_2}}\right)^2,
	\\
	\frac{\sum_{i=1}^nx_i^u}{P_k(n)}-\frac{P_k(n)}{\sum_{i=1}^nx_i^v}\ &\le\ \left(\sqrt{\frac{M_1}{m_2}}-\sqrt{\frac{m_1}{M_2}}\right)^2,
	\\
	\Bigl(\sum_{i=1}^nx_i^u\Bigr)\Bigl(\sum_{i=1}^nx_i^v\Bigr)-P_k(n)^2\ &\le\ \frac{n^2}{4}(M_1M_2-m_1m_2)^2,\qquad\text{and}
	\\		\sum_{i=1}^nx_i^v+\frac{m_2M_2}{M_1m_1}\sum_{i=1}^nx_i^u\ &\le\ \left(\frac{M_2}{m_1}+\frac{m_2}{M_1}\right)P_k(n).
\end{align*}
\end{theorem}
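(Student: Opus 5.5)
The plan is to reduce Theorem \ref{thm:main3} to the classical Cauchy inequality and its converse forms (P\'olya--Szeg\H o, Cassels/Diaz--Metcalf, Ozeki), as collected in \cite{DJP}. I would apply them to the vectors $a_i=x_i^{u/2}$ and $b_i=x_i^{v/2}$, where $x_i=L_{k,i}L_{k,i+1}>0$. Since $u+v=2$, we have $a_ib_i=x_i$, $a_i^2=x_i^u$ and $b_i^2=x_i^v$. By the third identity of Theorem \ref{thm:identities},
\begin{align*}
\sum_{i=1}^na_ib_i=\sum_{i=1}^nL_{k,i}L_{k,i+1}=P_k(n),\qquad \sum_{i=1}^na_i^2=\sum_{i=1}^nx_i^u,\qquad \sum_{i=1}^nb_i^2=\sum_{i=1}^nx_i^v .
\end{align*}
Every quantity in the statement is therefore a Cauchy-type expression in $(a_i)$ and $(b_i)$, with $P_k(n)$ in place of $\sum a_ib_i$.

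The first step is to justify the bounds $m_1\le a_i\le M_1$ and $m_2\le b_i\le M_2$ for all $1\le i\le n$. By Lemma \ref{lem:monotone}, $(L_{k,i})_{i\ge1}$ is positive and nondecreasing; directly, $L_{k,1}=k$, $L_{k,2}=k^2+2$, and the recurrence with $k\ge1$ gives $L_{k,i+1}\ge L_{k,i}$. Hence $x_i=L_{k,i}L_{k,i+1}$ is positive and increasing in $i$. Consequently $t\mapsto t^{u/2}$ applied to $x_i$ gives a monotone sequence: increasing if $u>0$, decreasing if $u<0$, constant if $u=0$. So its minimum and maximum over $1\le i\le n$ are attained at $i=1$ or $i=n$, which is exactly the definition of $m_1,M_1$; the same holds for $v$. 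For $n=1$ everything is trivial, since equality holds with $m_j=M_j$.

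The four inequalities then follow one at a time.
\begin{enumerate}[(a)]
\item The left bound $1\le(\sum a_i^2)(\sum b_i^2)/P_k(n)^2$ is Cauchy--Schwarz.
\item The right bound of the first line is the P\'olya--Szeg\H o inequality with bounds $m_1,M_1,m_2,M_2$.
\item The last inequality is Diaz--Metcalf. Since $m_2/M_1\le b_i/a_i\le M_2/m_1$, we have
\begin{align*}
\Bigl(\frac{b_i}{a_i}-\frac{m_2}{M_1}\Bigr)\Bigl(\frac{M_2}{m_1}-\frac{b_i}{a_i}\Bigr)\ge0 .
\end{align*}
Multiplying by $a_i^2$ and summing over $i$ gives $\sum b_i^2+\frac{m_2M_2}{M_1m_1}\sum a_i^2\le\bigl(\frac{M_2}{m_1}+\frac{m_2}{M_1}\bigr)\sum a_ib_i$, which is the claim.
\item The difference inequality $\frac{\sum a_i^2}{\sum a_ib_i}-\frac{\sum a_ib_i}{\sum b_i^2}\le\bigl(\sqrt{M_1/m_2}-\sqrt{m_1/M_2}\bigr)^2$ is the additive Cassels-type conversion as stated in \cite{DJP}.
\item The bound $\sum a_i^2\sum b_i^2-(\sum a_ib_i)^2\le\frac{n^2}{4}(M_1M_2-m_1m_2)^2$ is Ozeki's inequality.
\end{enumerate}

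The only genuinely sequence-specific inputs are the identity $\sum x_i=P_k(n)$ from Theorem \ref{thm:identities} and the endpoint location of the extrema. I expect the main obstacle to be bookkeeping rather than mathematics: matching the exact normalizations in which \cite{DJP} states the converse inequalities, since the conversions are often phrased with bounds on $a_i/b_i$ rather than on $a_i$ and $b_i$ separately. I would first check each cited form against the bound pairs $(m_1,M_1)$ and $(m_2,M_2)$ and re-derive it from the elementary product $(\cdot)(\cdot)\ge0$ trick whenever the normalization differs. I would also check the degenerate case $u=v=1$, where $m_j\le M_j$ still holds and all inequalities remain valid.
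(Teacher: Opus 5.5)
Your proposal is correct and follows the paper's proof. The paper invokes Theorem~\ref{thm:djp9} with $x_i=L_{k,i}L_{k,i+1}$ and $S_n^{[1]}(x)=P_k(n)$, reading off $m_1,M_1,m_2,M_2$ at $i=1,n$ via Lemma~\ref{lem:monotone}. That theorem packages Cauchy--Schwarz, P\'olya--Szeg\H o, the Cassels-type difference bound, Ozeki and Diaz--Metcalf for $a_i=x_i^{u/2}$, $b_i=x_i^{v/2}$; you make the same substitution and endpoint argument but unpack it into these classical ingredients and re-derive the Diaz--Metcalf step directly, a slightly more self-contained variant.
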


The fourth identity of Theorem~\ref{thm:identities} has no analogue in the Fibonacci-only setting of \cite{DJP}, since it relates the two families $F_{k,\cdot}$ and $L_{k,\cdot}$ rather than closing either one individually. Combined with the Cauchy-Schwarz inequality, it yields the following cross-family bound.

\medskip

\begin{theorem}[Cross-family Cauchy-Schwarz inequality]\label{thm:main4}
	For every $k\ge1$ and $n\ge1$,
	\begin{align*}
	\Bigl(\sum_{i=1}^nF_{k,2i}\Bigr)^2\ \le\ \frac{F_{k,n}F_{k,n+1}}{k}\cdot D_k(n),
	\end{align*}
	with equality if and only if $n=1$.
\end{theorem}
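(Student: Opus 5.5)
The plan is to apply the Cauchy--Schwarz inequality to the vectors $(F_{k,1},\dots,F_{k,n})$ and $(L_{k,1},\dots,L_{k,n})$ and then rewrite every term with Theorem~\ref{thm:identities}. By the fourth identity, $F_{k,i}L_{k,i}=F_{k,2i}$ for every $i\ge1$. Summing over $i$ gives $\sum_{i=1}^nF_{k,2i}=\sum_{i=1}^nF_{k,i}L_{k,i}$. Cauchy--Schwarz then yields
\begin{align*}
\Bigl(\sum_{i=1}^nF_{k,2i}\Bigr)^2\le\Bigl(\sum_{i=1}^nF_{k,i}^2\Bigr)\Bigl(\sum_{i=1}^nL_{k,i}^2\Bigr).
\end{align*}
The second and first identities of Theorem~\ref{thm:identities} identify the two factors on the right as $F_{k,n}F_{k,n+1}/k$ and $D_k(n)$. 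This proves the inequality.

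For the equality case, recall that Cauchy--Schwarz with positive entries is an equality exactly when the two vectors are proportional, that is, when $L_{k,i}/F_{k,i}$ is constant for $1\le i\le n$.

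For $n=1$ both vectors have a single entry, so equality holds trivially. As a direct check: the left side is $F_{k,2}^2=k^2$, and the right side is $(F_{k,1}F_{k,2}/k)\,D_k(1)=1\cdot(L_{k,1}L_{k,2}-2k)/k=(k(k^2+2)-2k)/k=k^2$.

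For $n\ge2$ it suffices to show the ratio is not constant, and indices $1$ and $2$ already show this. We have $L_{k,1}/F_{k,1}=k$, while $L_{k,2}/F_{k,2}=(k^2+2)/k=k+2/k\ne k$. Hence the vectors are not proportional and the inequality is strict.

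The step needing the most care is the equality characterization. It uses the strict form of Cauchy--Schwarz, which requires the entries to be nonzero and gives equality if and only if the vectors are proportional. We also need $F_{k,i}>0$ and $L_{k,i}>0$ for $i\ge1$, which follows immediately from the recurrences with $k\ge1$. The rest is direct substitution of the earlier identities.
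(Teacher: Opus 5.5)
Your proposal is correct and follows the paper's proof essentially verbatim. Both arguments apply Cauchy--Schwarz to $(F_{k,i})$ and $(L_{k,i})$, substitute the cross-identity and the two sum-of-squares identities of Theorem~\ref{thm:identities}, and get strictness for $n\ge2$ from the ratio mismatch at indices $1$ and $2$ ($k$ versus $k+2/k$). Your direct check that both sides equal $k^2$ at $n=1$ is a small extra that the paper replaces by the trivial proportionality of one-entry vectors.
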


Theorems \ref{thm:main1}-\ref{thm:main4} are stated in full generality in $k$; setting $k=1$ recovers, respectively, the ordinary-Lucas analogue of the Alzer-Luca-type refinement (\cite[Theorem 10]{DJP}), the ordinary-Lucas analogues of \cite[Theorems 11-12]{DJP}, and a Fibonacci-Lucas cross bound not present in \cite{DJP}.

\medskip

The paper is organized as follows. Section \ref{sec:prelim} recalls the required preliminaries: further identities for $k$-Fibonacci and $k$-Lucas numbers, and a precise restatement of the general power-sum theorems of \cite{DJP} that Theorems \ref{thm:main1} to \ref{thm:main3} rely on. Section \ref{sect3} proves Theorem \ref{thm:identities} and derives Theorems \ref{thm:main1} to \ref{thm:main3} from it. Section \ref{sect4} proves Theorem \ref{thm:main4} and records further $k$-Fibonacci/$k$-Lucas power-sum identities amenable to the same treatment. Section \ref{sect5} collects concluding remarks, including the specializations to ordinary Fibonacci and Lucas numbers at $k=1$.

\section{Preliminaries}\label{sec:prelim}

We collect here the identities and general theorems on which the proofs of Section \ref{sect3} rest. Section \ref{sec:prelim-klucas} records the $k$-Fibonacci/$k$-Lucas facts needed to prove Theorem \ref{thm:identities} and to justify the monotonicity used in Theorems~\ref{thm:main2} and \ref{thm:main3}. Section \ref{sec:prelim-djp} restates, without proof, the general power-sum theorems of \cite{DJP} from which Theorems \ref{thm:main1} to \ref{thm:main3} are derived.

\subsection{Identities for $k$-Fibonacci and $k$-Lucas numbers}\label{sec:prelim-klucas}

Throughout, $k$ denotes a fixed positive integer, and $\sigma_1,\sigma_2$ are as in Section~\ref{sec:background}, so that $\sigma_1+\sigma_2=k$, $\sigma_1\sigma_2=-1$, and $\sigma_1>1>0>\sigma_2>-1$.

\begin{lemma}\label{lem:step}
	For every $i\ge1$,
	\begin{align*}
	L_{k,i+1}-L_{k,i-1}=kL_{k,i},\qquad F_{k,i+1}-F_{k,i-1}=kF_{k,i}.
	\end{align*}
\end{lemma}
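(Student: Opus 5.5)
The identities are simply the defining recurrences shifted by one index, so the plan is to read them off directly rather than invoke the Binet formulas. For $i\ge1$, the recurrence $L_{k,n+1}=kL_{k,n}+L_{k,n-1}$ is stated for every $n\ge1$. Taking $n=i$ gives $L_{k,i+1}=kL_{k,i}+L_{k,i-1}$, and subtracting $L_{k,i-1}$ from both sides yields $L_{k,i+1}-L_{k,i-1}=kL_{k,i}$. The same manipulation applied to $F_{k,n+1}=kF_{k,n}+F_{k,n-1}$ with $n=i$ gives $F_{k,i+1}-F_{k,i-1}=kF_{k,i}$.

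The only point that needs care is the range of indices. At $i=1$ the recurrence involves the initial values $L_{k,0}=2$ and $F_{k,0}=0$. Since the recurrence is stated for $n\ge1$, these cases are included. As a sanity check, $L_{k,2}=k\cdot k+2=k^2+2$, so $L_{k,2}-L_{k,0}=k^2=kL_{k,1}$. Likewise $F_{k,2}=k$, so $F_{k,2}-F_{k,0}=k=kF_{k,1}$.

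As a cross-check one could also verify the identities via Binet. Using $\sigma_j^{i+1}-\sigma_j^{i-1}=\sigma_j^{i-1}(\sigma_j^2-1)$ and $\sigma_j^2=k\sigma_j+1$, one gets $\sigma_j^{i+1}-\sigma_j^{i-1}=k\sigma_j^{i}$ for $j=1,2$. Summing over $j$, or taking the difference and dividing by $\sigma_1-\sigma_2$, gives both identities. There is no real obstacle here; the statement is a restatement of the recurrence. Its purpose is presumably to set up telescoping later: multiplying by $L_{k,i}$ or $F_{k,i}$ and summing over $i$ yields the sums of squares in Theorem \ref{thm:identities}.
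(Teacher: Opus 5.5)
Your proof is correct and is the paper's argument: the identity is the defining recurrence $L_{k,i+1}=kL_{k,i}+L_{k,i-1}$ (respectively $F_{k,i+1}=kF_{k,i}+F_{k,i-1}$) taken at $n=i\ge1$ and rearranged. The index check at $i=1$ and the Binet cross-check are both correct but not needed.
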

\begin{proof}
	Immediate from the recurrence $L_{k,i+1}=kL_{k,i}+L_{k,i-1}$ (respectively $F_{k,i+1}=kF_{k,i}+F_{k,i-1}$), rearranged.
\end{proof}

\begin{lemma}\label{lem:squares}
	For every $m\ge0$,
	\begin{align*}
	L_{k,m}^2=L_{k,2m}+2(-1)^m.
\end{align*}
\end{lemma}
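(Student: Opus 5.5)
The plan is to argue directly from the Binet formula $L_{k,m}=\sigma_1^m+\sigma_2^m$ recalled in Section~\ref{sec:background}, using only the relation $\sigma_1\sigma_2=-1$. No induction is needed, since the identity is a one-line algebraic consequence of the closed form.

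First, square the Binet expression:
\begin{align*}
L_{k,m}^2=(\sigma_1^m+\sigma_2^m)^2=\sigma_1^{2m}+\sigma_2^{2m}+2(\sigma_1\sigma_2)^m.
\end{align*}
Second, identify the first two terms with $L_{k,2m}$ by applying the Binet formula at index $2m$. Third, substitute $\sigma_1\sigma_2=-1$, which turns the cross term into $2(-1)^m$. Together these give $L_{k,m}^2=L_{k,2m}+2(-1)^m$ for every $m\ge0$.

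As a sanity check, at $m=0$ the left side is $L_{k,0}^2=4$ and the right side is $L_{k,0}+2=4$. At $m=1$ the left side is $k^2$, while $L_{k,2}=kL_{k,1}+L_{k,0}=k^2+2$, so the right side is $k^2+2-2=k^2$. Both agree.

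There is no real obstacle. The only point needing care is that the Binet formula holds for all $m\ge0$, including $m=0$ and the index $2m$. This follows because $\sigma_1,\sigma_2$ are the roots of $x^2=kx+1$, so $\sigma_1^m+\sigma_2^m$ satisfies the same recurrence as $L_{k,m}$, and the initial values $2$ and $\sigma_1+\sigma_2=k$ match.
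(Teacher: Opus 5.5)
Your proof is correct and is exactly the paper's argument: square $L_{k,m}=\sigma_1^m+\sigma_2^m$, recognize $\sigma_1^{2m}+\sigma_2^{2m}=L_{k,2m}$, and use $\sigma_1\sigma_2=-1$ to turn the cross term into $2(-1)^m$. Your extra remarks (the checks at $m=0,1$, and the justification of the Binet formula via $x^2=kx+1$ with matching initial values $2$ and $k$) are sound additions that the paper takes for granted.
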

\begin{proof}
	By the Binet formula $L_{k,m}=\sigma_1^m+\sigma_2^m$,
	\begin{align*}
	L_{k,m}^2=\sigma_1^{2m}+\sigma_2^{2m}+2(\sigma_1\sigma_2)^m=L_{k,2m}+2(-1)^m,
\end{align*}
	using $\sigma_1\sigma_2=-1$.
\end{proof}

\begin{lemma}\label{lem:product}
	For every $i\ge0$,
	\[
	L_{k,i}L_{k,i+1}=L_{k,2i+1}+k(-1)^i.
	\]
\end{lemma}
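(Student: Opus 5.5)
The plan is to argue exactly as in the proof of Lemma~\ref{lem:squares}, using the Binet formula $L_{k,n}=\sigma_1^n+\sigma_2^n$ together with the relations $\sigma_1+\sigma_2=k$ and $\sigma_1\sigma_2=-1$ recorded in Section~\ref{sec:prelim-klucas}.

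First, expand the product directly:
\begin{align*}
L_{k,i}L_{k,i+1}=(\sigma_1^i+\sigma_2^i)(\sigma_1^{i+1}+\sigma_2^{i+1})
=\sigma_1^{2i+1}+\sigma_2^{2i+1}+\sigma_1^i\sigma_2^{i+1}+\sigma_1^{i+1}\sigma_2^{i}.
\end{align*}
The first two terms are $L_{k,2i+1}$ by the Binet formula. The two cross terms share the factor $(\sigma_1\sigma_2)^i$, so they combine as
\begin{align*}
\sigma_1^i\sigma_2^{i+1}+\sigma_1^{i+1}\sigma_2^{i}=(\sigma_1\sigma_2)^i(\sigma_1+\sigma_2)=(-1)^i k.
\end{align*}
This gives $L_{k,i}L_{k,i+1}=L_{k,2i+1}+k(-1)^i$ for every $i\ge0$.

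As a sanity check, at $i=0$ the identity reads $L_{k,0}L_{k,1}=2k$. Since $L_{k,1}=k$, the right-hand side is $k+k=2k$, which matches.

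There is no real obstacle here. The only point needing care is the factorization of the cross terms: writing them as $(\sigma_1\sigma_2)^i(\sigma_1+\sigma_2)$ is what makes both Vieta relations enter simultaneously.

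An alternative route would be induction on $i$ using Lemma~\ref{lem:step} and Lemma~\ref{lem:squares}. However, the Binet computation is shorter and uniform in $k$, so it is the one I would use.
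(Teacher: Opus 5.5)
Your proposal is correct and is essentially the paper's own proof: expand $(\sigma_1^i+\sigma_2^i)(\sigma_1^{i+1}+\sigma_2^{i+1})$ with the Binet formula, then group the cross terms as $(\sigma_1\sigma_2)^i(\sigma_1+\sigma_2)=k(-1)^i$. Your $i=0$ check ($L_{k,0}L_{k,1}=2k$) is a correct extra confirmation that the Binet formula covers the index $0$ case.
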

\begin{proof}
	By the Binet formulas,
	\[
	L_{k,i}L_{k,i+1}=(\sigma_1^i+\sigma_2^i)(\sigma_1^{i+1}+\sigma_2^{i+1})
	=\sigma_1^{2i+1}+\sigma_2^{2i+1}+\sigma_1^i\sigma_2^i(\sigma_1+\sigma_2)
	=L_{k,2i+1}+k(\sigma_1\sigma_2)^i,
	\]
	and $\sigma_1\sigma_2=-1$ gives the claim.
\end{proof}

\begin{lemma}[Monotonicity]\label{lem:monotone}
	For every $i\ge1$, $L_{k,i+1}>L_{k,i}$. Consequently the sequence $\bigl(L_{k,i}L_{k,i+1}\bigr)_{i\ge1}$ is also strictly increasing.
\end{lemma}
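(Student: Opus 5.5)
The plan is to show that every term is positive and then read monotonicity directly off the recurrence.

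First I will prove by induction that $L_{k,i}\ge1$ for all $i\ge1$, with $L_{k,i}>0$ already at $i=0$. The base values are $L_{k,0}=2>0$ and $L_{k,1}=k\ge1$. For the inductive step, suppose $L_{k,i}\ge1$ and $L_{k,i-1}>0$. Then $L_{k,i+1}=kL_{k,i}+L_{k,i-1}\ge L_{k,i}+L_{k,i-1}>0$. This shows that all terms are positive and that $L_{k,i+1}\ge1$.

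With positivity in hand, strict monotonicity for $i\ge1$ follows from the recurrence in the form of Lemma \ref{lem:step}:
\[
L_{k,i+1}-L_{k,i}=(k-1)L_{k,i}+L_{k,i-1}\ge L_{k,i-1}>0.
\]
Here $k-1\ge0$ and $L_{k,i}>0$, so the first term is nonnegative, while the second is strictly positive because $i-1\ge0$. The only point needing care is the first step, $i=1$. There the recurrence gives $L_{k,2}-L_{k,1}=(k-1)k+L_{k,0}=k^2-k+2>0$. This case is not actually exceptional: it relies on $L_{k,0}=2>0$, which the general argument already uses. In particular, at $k=1$ we get $L_2=3>L_1=1$, even though $L_0=2>L_1=1$; since the claim only starts at $i=1$, no issue arises.

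For the consequence, I will write
\[
L_{k,i+1}L_{k,i+2}-L_{k,i}L_{k,i+1}=L_{k,i+1}\bigl(L_{k,i+2}-L_{k,i}\bigr).
\]
The first factor is positive. The second is $L_{k,i+2}-L_{k,i}=(L_{k,i+2}-L_{k,i+1})+(L_{k,i+1}-L_{k,i})>0$ by the first part, or equivalently it equals $kL_{k,i+1}>0$ by Lemma \ref{lem:step}. Hence the product sequence is strictly increasing.

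There is no real obstacle here. The only step needing attention is the base of the induction, where $L_{k,0}=2$ exceeds $L_{k,1}$ when $k=1$. That is why the statement begins at $i\ge1$, and why the argument goes through the positivity of $L_{k,i-1}$ rather than through monotonicity from index $0$.
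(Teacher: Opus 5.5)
Your proposal is correct and follows the paper's proof: positivity of all $L_{k,j}$ by induction, then $L_{k,i+1}-L_{k,i}=(k-1)L_{k,i}+L_{k,i-1}\ge L_{k,i-1}>0$ for $i\ge1$. For the product sequence you factor the difference as $L_{k,i+1}\bigl(L_{k,i+2}-L_{k,i}\bigr)=kL_{k,i+1}^2>0$ instead of citing that a product of positive strictly increasing sequences is strictly increasing, which is an equally valid minor variant.
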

\begin{proof}
	By Lemma~\ref{lem:step}, $$L_{k,i+1}-L_{k,i}=(k-1)L_{k,i}+L_{k,i-1},$$ for $i\ge1$. Since $L_{k,j}>0$ for all $j\ge0$ (immediate by induction from $L_{k,0}=2$, $L_{k,1}=k\ge1$, and the recurrence) and $k\ge1$, both terms on the right are nonnegative, and $L_{k,i-1}>0$; hence $L_{k,i+1}-L_{k,i}\ge L_{k,i-1}>0$. (The statement fails at $i=0$ whenever $k=1$, since $L_{1,1}=1<2=L_{1,0}$; this is why the lemma is stated for $i\ge1$.) The second claim follows since a product of two positive, strictly increasing sequences is strictly increasing.
\end{proof}

Lemma~\ref{lem:monotone} is what allows the extrema in the converse-H\"older and Cauchy-conversion constants of Theorems~\ref{thm:main2} and~\ref{thm:main3} to be read off directly from the endpoints $i=1$ and $i=n$, exactly as in \cite[Theorems~11--12]{DJP}.

\subsection{The general power-sum theorems of \cite{DJP}}\label{sec:prelim-djp}

We now recall, without proof, the results of \cite{DJP} that Theorems~\ref{thm:main1}-\ref{thm:main3} instantiate. Throughout this subsection, $x=(x_1,\dots,x_n)\in\mathbb{R}_+^n$ is an arbitrary positive sequence, and $S_n^{[\alpha]}(x)=\sum_{i=1}^nx_i^\alpha$ as before.

\begin{theorem}[{\cite[Theorem 6]{DJP}}]\label{thm:djp6}
	Let $p>1$, $q=p/(p-1)$.
	\begin{enumerate}[(i)]
		\item If $x_i\ge1$ for all $i$, and $u,v\in\mathbb{R}$ are such that $\alpha=u/p+v/q\ge\beta>0$, then
		\begin{align*}
		\bigl(S_n^{[u]}(x)\bigr)^{1/p}\bigl(S_n^{[v]}(x)\bigr)^{1/q}\ge S_n^{[\alpha]}(x)\ge\frac{1}{n^{\alpha/\beta-1}}\bigl(S_n^{[\beta]}(x)\bigr)^{\alpha/\beta}\ge S_n^{[\beta]}(x)\ge\bigl(S_n^{[\alpha]}(x)\bigr)^{\beta/\alpha}.
		\end{align*}
		\item If $u,v\in\mathbb{R}$ are such that $\alpha=u/p+v/q$ and $0\le\alpha<\beta$, then
		\begin{align*}
		\bigl(S_n^{[u]}(x)\bigr)^{1/p}\bigl(S_n^{[v]}(x)\bigr)^{1/q}\ge S_n^{[\alpha]}(x)\ge\bigl(S_n^{[\beta]}(x)\bigr)^{\alpha/\beta}.
	\end{align*}
	\end{enumerate}
\end{theorem}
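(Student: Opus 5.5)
The statement just above is \cite[Theorem 6]{DJP}, recalled without proof. I will nevertheless sketch how I would establish it from H\"older's inequality and power-mean monotonicity.

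\emph{Part (i).} The first inequality is H\"older applied to the pairs $a_i=x_i^{u/p}$ and $b_i=x_i^{v/q}$. Since $a_ib_i=x_i^{\alpha}$, $a_i^p=x_i^u$ and $b_i^q=x_i^v$, this gives
\[
S_n^{[\alpha]}(x)\le \bigl(S_n^{[u]}(x)\bigr)^{1/p}\bigl(S_n^{[v]}(x)\bigr)^{1/q}.
\]

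The second inequality is the power-mean inequality $M_\alpha\ge M_\beta$ for $\alpha\ge\beta>0$. Equivalently, it is Jensen's inequality for the convex function $t\mapsto t^{\alpha/\beta}$ applied to the values $x_i^\beta$:
\[
\frac1n S_n^{[\alpha]}\ge\Bigl(\frac1n S_n^{[\beta]}\Bigr)^{\alpha/\beta},
\]
which rearranges to the stated bound.

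The third inequality is the only place where $x_i\ge1$ is used. Since each $x_i^\beta\ge1$, we have $S_n^{[\beta]}\ge n$. Hence
\[
\bigl(S_n^{[\beta]}\bigr)^{\alpha/\beta-1}\ge n^{\alpha/\beta-1},
\]
because $\alpha/\beta-1\ge0$. Multiplying by $S_n^{[\beta]}$ gives the claim.

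The last inequality is the standard monotonicity of $\ell^r$-norms, $\|y\|_{r}\le\|y\|_{s}$ for $r\ge s$. Apply it to $y_i=x_i^\beta$ with $r=\alpha/\beta\ge1$ and $s=1$. This gives $(S^{[\alpha]})^{\beta/\alpha}\le S^{[\beta]}$. It follows from $\sum t_i^r\le(\sum t_i)^r$ for $t_i\ge0$ and $r\ge1$, which holds because each $t_i/\sum t_j\le1$.

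\emph{Part (ii).} The H\"older step is identical to part (i). For the second inequality, when $0<\alpha<\beta$ use the same norm monotonicity with $r=\beta/\alpha>1$ applied to $t_i=x_i^\alpha$. This gives $S^{[\beta]}\le (S^{[\alpha]})^{\beta/\alpha}$, i.e.\ $S^{[\alpha]}\ge(S^{[\beta]})^{\alpha/\beta}$. The case $\alpha=0$ is trivial, since the claim reads $n\ge1$.

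There is no real obstacle here. The only points needing care are tracking the sign of the exponent $\alpha/\beta-1$, and noticing that the hypothesis $x_i\ge1$ is needed only for the third link of part (i).
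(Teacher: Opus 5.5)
Your argument is correct and complete: H\"older with $a_i=x_i^{u/p}$, $b_i=x_i^{v/q}$ gives the first link, Jensen (power-mean monotonicity) gives the second, $S_n^{[\beta]}(x)\ge n$ gives the third, and $\sum t_i^r\le\bigl(\sum t_i\bigr)^r$ for $r\ge1$ gives the last link of (i) and the second link of (ii); $\alpha=0$ reduces to $n\ge1$, and $x_i\ge1$ is indeed used only in the third link. The paper itself gives no proof of this result and only cites \cite[Theorem 6]{DJP}, describing it as obtained by chaining H\"older's inequality with Jensen's inequality and the monotonicity of power means, which is essentially the route you take.
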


\begin{theorem}[{\cite[Theorem 7]{DJP}}]\label{thm:djp7}
	Let $0<p<1$, $q=p/(p-1)$.
	\begin{enumerate}[(i)]
		\item If $u,v\in\mathbb{R}$ are such that $\alpha=u/p+v/q\ge\beta>0$, then
		\begin{align*}
			\bigl(S_n^{[u]}(x)\bigr)^{1/p}\bigl(S_n^{[v]}(x)\bigr)^{1/q}\le S_n^{[\alpha]}(x)\le\bigl(S_n^{[\beta]}(x)\bigr)^{\alpha/\beta}.
		\end{align*}
		\item If $x_i\ge1$ for all $i$, and $u,v\in\mathbb{R}$ are such that $\alpha=u/p+v/q$ and $0<\alpha<\beta$, then
		\begin{align*}
			\bigl(S_n^{[u]}(x)\bigr)^{1/p}\bigl(S_n^{[v]}(x)\bigr)^{1/q}\le S_n^{[\alpha]}(x)\le\frac{1}{n^{\alpha/\beta-1}}\bigl(S_n^{[\beta]}(x)\bigr)^{\alpha/\beta}\le S_n^{[\beta]}(x)\le\bigl(S_n^{[\alpha]}(x)\bigr)^{\beta/\alpha}.
		\end{align*}
	\end{enumerate}
\end{theorem}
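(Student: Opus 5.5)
The statement just above is Theorem~\ref{thm:djp7}, which the paper quotes from \cite{DJP} without proof. I would reprove it directly from H\"older's reversed inequality together with the power-mean and subadditivity facts, mirroring the proof of Theorem~\ref{thm:djp6}.

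\textbf{First inequality in both parts.} For $0<p<1$ the conjugate exponent $q=p/(p-1)$ is negative. The reversed H\"older inequality then gives
\[
\sum_{i=1}^n a_ib_i\ \ge\ \Bigl(\sum a_i^p\Bigr)^{1/p}\Bigl(\sum b_i^q\Bigr)^{1/q}
\]
for positive $a_i,b_i$. I take $a_i=x_i^{u/p}$ and $b_i=x_i^{v/q}$, so that $a_ib_i=x_i^{\alpha}$, $a_i^p=x_i^u$ and $b_i^q=x_i^v$. This yields $(S_n^{[u]})^{1/p}(S_n^{[v]})^{1/q}\le S_n^{[\alpha]}$ for arbitrary positive $x$, which is the first link in both (i) and (ii).

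\textbf{Part (i).} The second inequality, $S_n^{[\alpha]}\le (S_n^{[\beta]})^{\alpha/\beta}$ for $\alpha\ge\beta>0$, is the standard monotonicity of $\ell^r$ norms. Set $y_i=x_i^\beta$ and $r=\alpha/\beta\ge1$; then $\sum y_i^r\le(\sum y_i)^r$. To see this, normalise so that $\sum y_i=1$; each $y_i\le1$, so $y_i^r\le y_i$. No lower bound $x_i\ge1$ is needed here, which is consistent with the hypotheses of (i).

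\textbf{Part (ii).} Here $0<\alpha<\beta$ and $x_i\ge1$.
\begin{itemize}
\item The second link, $S_n^{[\alpha]}\le n^{1-\alpha/\beta}(S_n^{[\beta]})^{\alpha/\beta}$, is the power-mean inequality $M_\alpha(x)\le M_\beta(x)$, written as $(\tfrac1n S^{[\alpha]})^{1/\alpha}\le(\tfrac1n S^{[\beta]})^{1/\beta}$; equivalently, it is Jensen's inequality for the concave map $t\mapsto t^{\alpha/\beta}$.
\item The third link, $n^{1-\alpha/\beta}(S^{[\beta]})^{\alpha/\beta}\le S^{[\beta]}$, is equivalent to $n^{1-\alpha/\beta}\le (S^{[\beta]})^{1-\alpha/\beta}$. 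Since the exponent $1-\alpha/\beta$ is positive, this is equivalent to $n\le S^{[\beta]}$, which holds because every $x_i^\beta\ge1$.
\item The last link, $S^{[\beta]}\le (S^{[\alpha]})^{\beta/\alpha}$, is part (i)'s norm monotonicity applied with the roles of the exponents swapped: take $y_i=x_i^\alpha$ and $r=\beta/\alpha>1$.
\end{itemize}

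The only delicate point is to track the sign of $q$ carefully when invoking reversed H\"older, and to note exactly where the hypothesis $x_i\ge1$ is used. In part (ii) it is needed only for the third link.
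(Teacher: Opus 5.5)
Your proof is correct. The reversed H\"older step with $a_i=x_i^{u/p}$, $b_i=x_i^{v/q}$ uses $q<0$ correctly, and each remaining link checks out: the monotonicity $\sum y_i^r\le(\sum y_i)^r$ for $r\ge1$, the power-mean (Jensen) link, and the reduction of the third link in (ii) to $n\le S_n^{[\beta]}(x)$. The paper gives no proof of this statement and simply cites \cite{DJP}; it describes that method as chaining (reversed) H\"older with Jensen and power-mean monotonicity, so your argument is essentially the intended one.
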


Since the terminal exponent $\beta/\alpha$ in the displayed chain is undefined at $\alpha=0$, we state part (ii) for $0<\alpha<\beta$ throughout this paper, which is all that Theorem \ref{thm:main1}(iv) requires.
\medskip
\begin{theorem}[{\cite[Theorem 8]{DJP}}]\label{thm:djp8}
	Let $u,v\in\mathbb{R}$, $\alpha=u/p+v/q$, and suppose $0<m<M$, where 
	$$m=\min_i\{x_i^{(u-v)/p}\},\qquad M=\max_i\{x_i^{(u-v)/p}\}.$$
	\begin{enumerate}[(i)]
		\item If $p>1$, then
		\begin{align*}
			(M-m)S_n^{[u]}(x)+(mM^p-Mm^p)S_n^{[v]}(x)\le(M^p-m^p)\,S_n^{[\alpha]}(x);
		\end{align*}
		if $0<p<1$, the reversed inequality holds.
		\item If $p>1$, then
		\begin{align*}
			\bigl(S_n^{[u]}(x)\bigr)^{1/p}\bigl(S_n^{[v]}(x)\bigr)^{1/q}\le\lambda\,S_n^{[\alpha]}(x),\qquad \lambda=\bigl|M^p-m^p\bigr|\,\bigl|p(M-m)\bigr|^{-1/p}\,\bigl|q(mM^p-Mm^p)\bigr|^{-1/q}.
		\end{align*}
		If $0<p<1$, the reversed inequality holds.
	\end{enumerate}
\end{theorem}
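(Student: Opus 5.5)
The plan is to reduce Theorem~\ref{thm:djp8} to a pointwise convexity estimate followed by Young's inequality, which is the standard route for converse H\"older inequalities of Diaz--Metcalf type. Put $a_i=x_i^{u/p}$ and $b_i=x_i^{v/q}$. Then
\[
a_i^p=x_i^u,\qquad b_i^q=x_i^v,\qquad a_ib_i=x_i^{\alpha}.
\]
The ratio $t_i:=a_i/b_i^{q/p}=x_i^{u/p-v/p}=x_i^{(u-v)/p}$ lies in $[m,M]$ by the definition of $m$ and $M$.

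For part (i), take $p>1$, so $t\mapsto t^p$ is convex on $[m,M]$. The chord through $(m,m^p)$ and $(M,M^p)$ therefore lies above the graph, giving for every $t\in[m,M]$
\[
(M-m)t^p\le (M^p-m^p)t-(mM^p-Mm^p).
\]
Apply this at $t=t_i$ and multiply by $b_i^q=x_i^v>0$. Since $t_i^pb_i^q=x_i^u$ and $t_ib_i^q=x_i^{(u-v)/p+v}=x_i^{\alpha}$, this yields
\[
(M-m)x_i^u+(mM^p-Mm^p)x_i^v\le (M^p-m^p)x_i^{\alpha}.
\]
Summing over $i$ gives (i). For $0<p<1$ the function $t^p$ is concave, the chord lies below the graph, and every step reverses.

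For part (ii), write $A=M-m>0$, $B=mM^p-Mm^p=mM(M^{p-1}-m^{p-1})$ and $C=M^p-m^p>0$. When $p>1$ we have $B>0$. Young's inequality $X^{1/p}Y^{1/q}\le X/p+Y/q$ with $X=pA\,S_n^{[u]}(x)$ and $Y=qB\,S_n^{[v]}(x)$ gives
\[
(pA)^{1/p}(qB)^{1/q}\bigl(S_n^{[u]}(x)\bigr)^{1/p}\bigl(S_n^{[v]}(x)\bigr)^{1/q}\le A\,S_n^{[u]}(x)+B\,S_n^{[v]}(x)\le C\,S_n^{[\alpha]}(x),
\]
where the last step is (i). Dividing through gives exactly the constant $\lambda$.

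The step needing the most care is the case $0<p<1$, where signs must be tracked. Here $q<0$, and $M^{p-1}<m^{p-1}$ forces $B<0$, so $qB>0$ and $X,Y$ are still positive. The reversed Young inequality $X^{1/p}Y^{1/q}\ge X/p+Y/q$ holds for $0<p<1$ and $X,Y>0$. Chaining it with the reversed form of (i) gives
\[
(pA)^{1/p}(qB)^{1/q}\bigl(S_n^{[u]}(x)\bigr)^{1/p}\bigl(S_n^{[v]}(x)\bigr)^{1/q}\ge A\,S_n^{[u]}(x)+B\,S_n^{[v]}(x)\ge C\,S_n^{[\alpha]}(x).
\]
Because $pA$, $qB$ and $C$ are positive, they coincide with their absolute values, so this is the reversed inequality with the stated $\lambda$. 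The absolute values in the formula for $\lambda$ exist precisely to cover both sign regimes uniformly.
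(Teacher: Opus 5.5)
Your proof is correct. For part (ii) with $p>1$ it coincides with the paper's own argument: in the proof of Theorem~\ref{thm:main2} the paper derives the $\lambda$-bound from (i) by weighted AM--GM with $a=pA\,S_n^{[u]}(x)$ and $b=qB\,S_n^{[v]}(x)$, which is exactly your Young step.

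The difference is that the paper never proves part (i), nor the $0<p<1$ half of part (ii); it simply cites \cite{DJP} for them. Your argument fills exactly those gaps, so your version is self-contained. The pieces are:
\begin{itemize}
\item the substitution $a_i=x_i^{u/p}$, $b_i=x_i^{v/q}$, which reduces the statement to a converse H\"older inequality for the ratios $t_i=x_i^{(u-v)/p}\in[m,M]$;
\item the chord bound for the convex (respectively concave) function $t^p$ on $[m,M]$;
\item the reversed Young inequality for $0<p<1$, together with the sign check $q<0$, $B<0$, hence $qB>0$.
\end{itemize}
It also makes visible that only $m\le x_i^{(u-v)/p}\le M$ is used, not that $m$ and $M$ are the exact extrema.

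One minor correction to your closing remark. As your own sign analysis shows, $M^p-m^p$, $p(M-m)$ and $q(mM^p-Mm^p)$ are all positive both for $p>1$ and for $0<p<1$. So the absolute values in $\lambda$ are redundant in both regimes covered by the statement. They would only matter if $p<0$ were admitted, where all three quantities are negative.
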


We impose $0<m<M$ explicitly, since $m=M$ (which occurs when $n=1$, or when $x_i^{(u-v)/p}$ is constant) makes $\lambda$ undefined. 
\medskip
\begin{theorem}[{\cite[Theorem 9]{DJP}}]\label{thm:djp9}
	Let $u,v\in\mathbb{R}$, $\alpha=u/2+v/2$, and set
	\begin{align*}
	m_1=\min_i\{x_i^{u/2}\},\ M_1=\max_i\{x_i^{u/2}\},\qquad m_2=\min_i\{x_i^{v/2}\},\ M_2=\max_i\{x_i^{v/2}\}.
\end{align*}
	Then
	\begin{align*}
	1\le\frac{S_n^{[u]}(x)S_n^{[v]}(x)}{\bigl(S_n^{[\alpha]}(x)\bigr)^2} &\le\frac14\left(\sqrt{\frac{M_1M_2}{m_1m_2}}+\sqrt{\frac{m_1m_2}{M_1M_2}}\right)^2,
\\
	\frac{S_n^{[u]}(x)}{S_n^{[\alpha]}(x)}-\frac{S_n^{[\alpha]}(x)}{S_n^{[v]}(x)} &\le\left(\sqrt{\frac{M_1}{m_2}}-\sqrt{\frac{m_1}{M_2}}\right)^2,
\\
	S_n^{[u]}(x)S_n^{[v]}(x)-\bigl(S_n^{[\alpha]}(x)\bigr)^2&\le\frac{n^2}{4}(M_1M_2-m_1m_2)^2, \qquad\text{and}
\\
	S_n^{[v]}(x)+\frac{m_2M_2}{M_1m_1}S_n^{[u]}(x) &\le\left(\frac{M_2}{m_1}+\frac{m_2}{M_1}\right)S_n^{[\alpha]}(x).
\end{align*}
\end{theorem}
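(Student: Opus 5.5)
Theorem \ref{thm:djp9} is the classical family of converse Cauchy inequalities written in power-sum notation, so the plan is to reduce it to a statement about two positive vectors. Put
\[
a_i=x_i^{u/2},\qquad b_i=x_i^{v/2},
\]
so that $a_i^2=x_i^u$, $b_i^2=x_i^v$ and $a_ib_i=x_i^{\alpha}$ with $\alpha=u/2+v/2$. Write $A=S_n^{[u]}(x)=\sum a_i^2$, $B=S_n^{[v]}(x)=\sum b_i^2$ and $C=S_n^{[\alpha]}(x)=\sum a_ib_i$. By definition $m_1\le a_i\le M_1$ and $m_2\le b_i\le M_2$. The lower bound $1\le AB/C^2$ is then just the Cauchy--Schwarz inequality.

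\emph{Fourth inequality (Diaz--Metcalf).} From the componentwise bounds,
\[
m:=\frac{m_2}{M_1}\le\frac{b_i}{a_i}\le\frac{M_2}{m_1}=:M.
\]
Hence $(b_i-ma_i)(Ma_i-b_i)\ge0$ for each $i$, which expands to $b_i^2+mMa_i^2\le(m+M)a_ib_i$. Summing over $i$ gives
\[
B+mMA\le(m+M)C,
\]
which, after substituting $m$ and $M$, is exactly the fourth displayed inequality.

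\emph{First inequality (P\'olya--Szeg\H o).} Apply AM--GM to the left side of the previous step: $2\sqrt{mMAB}\le B+mMA\le(m+M)C$. This yields $AB/C^2\le(m+M)^2/(4mM)$. Since $M/m=M_1M_2/(m_1m_2)$, this equals $\tfrac14\bigl(\sqrt{M/m}+\sqrt{m/M}\bigr)^2$, the stated bound.

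\emph{Second inequality.} Run the Diaz--Metcalf argument with the roles of $a$ and $b$ swapped, using
\[
m':=\frac{m_1}{M_2}\le\frac{a_i}{b_i}\le\frac{M_1}{m_2}=:M'.
\]
This gives $A+m'M'B\le(m'+M')C$. Dividing by $C$ and setting $y=B/C>0$,
\[
\frac{A}{C}-\frac{C}{B}\le m'+M'-\Bigl(m'M'y+\frac1y\Bigr)\le m'+M'-2\sqrt{m'M'}=\bigl(\sqrt{M'}-\sqrt{m'}\bigr)^2 .
\]
The middle step uses AM--GM once more, and the final expression is the claimed right-hand side.

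\emph{Third inequality (Ozeki).} This is the step I expect to be the main obstacle. The naive route uses the Lagrange identity $AB-C^2=\tfrac12\sum_{i,j}(a_ib_j-a_jb_i)^2$ together with $|a_ib_j-a_jb_i|\le M_1M_2-m_1m_2$, since both products lie in $[m_1m_2,M_1M_2]$. It only gives the constant $n^2/2$, off by a factor of $2$. Deducing it from the P\'olya--Szeg\H o bound via $C\le nM_1M_2$ is weaker still.

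I would therefore follow Ozeki's original argument, or the proof of Izumino--Mori--Seo. The idea is to split the indices according to the sign of $a_ib_j-a_jb_i$ relative to a suitably chosen extremal configuration. One checks that $AB-C^2$ is convex in each variable separately, so the maximum is attained when every $a_i,b_i$ sits at an endpoint. Then one optimizes over how many indices take each endpoint pattern, which produces the factor $n^2/4$. Failing a self-contained argument, I would simply cite Ozeki's inequality, as \cite{DJP} does, since this theorem is recalled here without proof.
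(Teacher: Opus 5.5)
Your derivations of the first, second and fourth inequalities are correct. Cauchy--Schwarz, the Diaz--Metcalf step with $m=m_2/M_1$, $M=M_2/m_1$ and its swapped version, and the two AM--GM steps give exactly the stated constants. The paper proves none of this; it only cites \cite{DJP}. The gap is in the third inequality. Your plan lets $a_i\in[m_1,M_1]$ and $b_i\in[m_2,M_2]$ vary independently and uses separate convexity of $AB-C^2$ to push every coordinate to an endpoint; that step is fine. You then assert that optimizing over the pattern counts yields $n^2/4$, and it does not: with independent ranges the inequality with constant $n^2/4$ is false. For $n=3$, $a=(1,1,\varepsilon)$, $b=(1,\varepsilon,1)$ one has $m_1=m_2=\varepsilon$, $M_1=M_2=1$ and
\[
AB-C^2=(2+\varepsilon^2)^2-(1+2\varepsilon)^2=3-4\varepsilon+\varepsilon^4,
\]
which exceeds $\tfrac94(1-\varepsilon^2)^2$ for small $\varepsilon$ (at $\varepsilon=0.1$: $2.6001>2.2052$). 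More generally, if $3\mid n$ and the patterns $(M_1,M_2)$, $(M_1,m_2)$, $(m_1,M_2)$ each occur $n/3$ times, then
\[
AB-C^2=\frac{n^2}{9}\Bigl[M_1^2(M_2-m_2)^2+M_2^2(M_1-m_1)^2+(M_1M_2-m_1m_2)^2\Bigr],
\]
which tends to $\tfrac{n^2}{3}(M_1M_2-m_1m_2)^2$ as $m_1/M_1,\,m_2/M_2\to0$. So the vertex count cannot give $n^2/4$. Citing a box-only Ozeki inequality with that constant would not be sound either.

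The missing ingredient is the coupling between the two sequences. For $u\ne0$ you have $b_i=a_i^{v/u}$, so all points $(a_i,b_i)$ lie on the single monotone curve $t\mapsto(t^{u/2},t^{v/2})$. The bad configuration above needs $a_1=a_2$ with $b_1\ne b_2$, which cannot happen on that curve. Your separate-convexity step, however, moves $a_i$ and $b_i$ independently and leaves the power-sum class. It thereby discards exactly the information the constant $n^2/4$ depends on, so a proof of the third line has to keep the points on the curve. For $\alpha=0$ this is easy: $a_ib_i=1$ gives $C=n$ and $M_1M_2m_1m_2=1$, and your P\'olya--Szeg\H o bound yields $AB-C^2\le\frac{(M_1M_2-m_1m_2)^2}{4M_1M_2m_1m_2}\,C^2=\frac{n^2}{4}(M_1M_2-m_1m_2)^2$. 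This shortcut is special to $\alpha=0$, since otherwise $C$ is not fixed, and it does not cover the case $\alpha=1$ used in Theorem~\ref{thm:main3}. As written, your proposal does not establish the third inequality; in the paper it rests solely on the citation of \cite{DJP}.
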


Theorems \ref{thm:djp6}-\ref{thm:djp7} are the source of Theorem \ref{thm:main1}, applied with $x_i=L_{k,i}$, $\beta=2$, using the first identity of Theorem \ref{thm:identities} to identify $S_n^{[2]}(L_{k,\cdot})=D_k(n)$. Theorem~\ref{thm:djp8} is the source of Theorem~\ref{thm:main2}, applied with the same $x$ and $\alpha=2$, with $m,M$ evaluated at the endpoints $i=1,n$ by Lemma~\ref{lem:monotone}. Theorem~\ref{thm:djp9} is the source of Theorem~\ref{thm:main3}, applied with $x_i=L_{k,i}L_{k,i+1}$, $\alpha=1$, using the third identity of Theorem~\ref{thm:identities} to identify $S_n^{[1]}(x)=P_k(n)$, again with $m_1,M_1,m_2,M_2$ evaluated at $i=1,n$ by the second part of Lemma~\ref{lem:monotone}. The proofs are carried out in Section \ref{sect3}.

\section{Proofs of Theorems \ref{thm:identities} to \ref{thm:main3} }\label{sect3}

\subsection{Proof of Theorem \ref{thm:identities}}

We prove the four identities in turn.

\paragraph{Sum of squares, $L_{k,\cdot}$.} By Lemma \ref{lem:step}, $L_{k,i+1}-L_{k,i-1}=kL_{k,i}$ for every $i\ge1$. Multiplying both sides by $L_{k,i}$ gives
\begin{align*}
L_{k,i}L_{k,i+1}-L_{k,i-1}L_{k,i}=kL_{k,i}^2,\qquad\text{i.e.}\qquad L_{k,i}^2=\frac{1}{k}\bigl(L_{k,i}L_{k,i+1}-L_{k,i-1}L_{k,i}\bigr).
\end{align*}
Summing over $i=1,\dots,n$, the right-hand side telescopes as
\begin{align*}
\sum_{i=1}^nL_{k,i}^2=\frac{1}{k}\sum_{i=1}^n\bigl(L_{k,i}L_{k,i+1}-L_{k,i-1}L_{k,i}\bigr)=\frac{1}{k}\bigl(L_{k,n}L_{k,n+1}-L_{k,0}L_{k,1}\bigr).
\end{align*}
Since $L_{k,0}=2$ and $L_{k,1}=k$, we have $L_{k,0}L_{k,1}=2k$, so
\begin{align*}
\sum_{i=1}^nL_{k,i}^2=\frac{L_{k,n}L_{k,n+1}-2k}{k}=D_k(n).
\end{align*}

\paragraph{Sum of squares, $F_{k,\cdot}$.} Identically, Lemma \ref{lem:step} gives $F_{k,i+1}-F_{k,i-1}=kF_{k,i}$, hence $$F_{k,i}^2=\frac1k(F_{k,i}F_{k,i+1}-F_{k,i-1}F_{k,i}),$$ and summing telescopes to
\[
\sum_{i=1}^nF_{k,i}^2=\frac1k\bigl(F_{k,n}F_{k,n+1}-F_{k,0}F_{k,1}\bigr)=\frac{F_{k,n}F_{k,n+1}}{k},
\]
using $F_{k,0}=0$, $F_{k,1}=1$.

\paragraph{Sum of products $L_{k,i}L_{k,i+1}$.} By Lemma \ref{lem:product}, for every $i\ge0$,
\begin{equation}\label{eq:pf-lem-product}
	L_{k,i}L_{k,i+1}=L_{k,2i+1}+k(-1)^i.
\end{equation}
Summing \eqref{eq:pf-lem-product} over $i=1,\dots,n$,
\begin{equation}\label{eq:pf-step1}
	\sum_{i=1}^nL_{k,i}L_{k,i+1}=\sum_{i=1}^nL_{k,2i+1}+k\sum_{i=1}^n(-1)^i.
\end{equation}
The second sum on the right is elementary: $\sum_{i=1}^n(-1)^i$ equals $-1$ if $n$ is odd and $0$ if $n$ is even, i.e.
\begin{equation}\label{eq:pf-altsum}
	\sum_{i=1}^n(-1)^i=\frac{(-1)^n-1}{2}.
\end{equation}
For the first sum on the right-hand side of \eqref{eq:pf-lem-product}, the defining recurrence $L_{k,2i+2}=kL_{k,2i+1}+L_{k,2i}$ (the case $n=2i+1$ of $L_{k,n+1}=kL_{k,n}+L_{k,n-1}$) rearranges to $$L_{k,2i+1}=\frac1k(L_{k,2i+2}-L_{k,2i}),$$ so, summing over $i=0,\dots,m$ and telescoping,
\[
\sum_{i=0}^mL_{k,2i+1}=\frac1k\sum_{i=0}^m\bigl(L_{k,2i+2}-L_{k,2i}\bigr)=\frac1k\bigl(L_{k,2m+2}-L_{k,0}\bigr)=\frac{L_{k,2m+2}-2}{k}.
\]
Taking $m=n$ and subtracting the $i=0$ term $L_{k,1}=k$ gives
\begin{equation}\label{eq:pf-step2}
	\sum_{i=1}^nL_{k,2i+1}=\frac{L_{k,2n+2}-2}{k}-k.
\end{equation}
By Lemma~\ref{lem:squares} with $m=n+1$, $L_{k,n+1}^2=L_{k,2n+2}+2(-1)^{n+1}$, so
\begin{equation}\label{eq:pf-step3}
	L_{k,2n+2}=L_{k,n+1}^2-2(-1)^{n+1}=L_{k,n+1}^2+2(-1)^n.
\end{equation}
Substituting \eqref{eq:pf-step3} into \eqref{eq:pf-step2}, and then \eqref{eq:pf-step2} and \eqref{eq:pf-altsum} into \eqref{eq:pf-step1},
\begin{align*}
\sum_{i=1}^nL_{k,i}L_{k,i+1}=\frac{L_{k,n+1}^2+2(-1)^n-2}{k}-k+k\cdot\frac{(-1)^n-1}{2}.
\end{align*}
Expanding and regrouping the constant and $(-1)^n$ terms,
\begin{align*}
\sum_{i=1}^nL_{k,i}L_{k,i+1}&=\frac{L_{k,n+1}^2}{k}-k+\left(\frac{2}{k}+\frac{k}{2}\right)(-1)^n-\left(\frac2k+\frac k2\right)\\
&=\frac{L_{k,n+1}^2}{k}-k+\left((-1)^n-1\right)\left(\frac2k+\frac k2\right)\\
&=P_k(n).
\end{align*}

\paragraph{Cross-identity.} By the Binet formulas of Section~\ref{sec:background}, for every $i\ge1$,
\begin{align*}
F_{k,i}L_{k,i}=\frac{\sigma_1^i-\sigma_2^i}{\sigma_1-\sigma_2}\cdot(\sigma_1^i+\sigma_2^i)=\frac{\sigma_1^{2i}-\sigma_2^{2i}}{\sigma_1-\sigma_2}=F_{k,2i}.
\end{align*}
\paragraph{Reduction at $k=1$.} At $k=1$, $D_1(n)=L_nL_{n+1}-2$, the classical identity. For $P_k(n)$, at $k=1$,
\begin{align*}
P_1(n)=L_{n+1}^2-1+\bigl((-1)^n-1\bigr)\Bigl(2+\frac12\Bigr)=L_{n+1}^2-1+\frac52\bigl((-1)^n-1\bigr)=L_{n+1}^2+\frac{5(-1)^n-7}{2},
\end{align*}
the classical ordinary-Lucas identity. This completes the proof of Theorem \ref{thm:identities}. \qed

\subsection{Proof of Theorem \ref{thm:main1}}

We apply Theorems \ref{thm:djp6} and \ref{thm:djp7} with $x_i=L_{k,i}$, $i=1,\dots,n$, and $\beta=2$.

We first check the hypothesis $x_i\ge1$, which is required by Theorem \ref{thm:djp6}(i) and Theorem \ref{thm:djp7}(ii). Since $L_{k,1}=k\ge1$ and, by Lemma \ref{lem:monotone}, $(L_{k,i})_{i\ge1}$ is strictly increasing, we have $L_{k,i}\ge L_{k,1}=k\ge1$ for every $i\ge1$. Thus the hypothesis holds throughout, and in particular it is available (though not needed) in the two parts of Theorems \ref{thm:djp6} and \ref{thm:djp7} that do not require it.

By Theorem \ref{thm:identities}, $S_n^{[2]}(L_{k,\cdot})=\sum_{i=1}^nL_{k,i}^2=D_k(n)$.

\begin{enumerate}[(i)]
	\item Theorem \ref{thm:djp6}(i), with $\beta=2$ and $\alpha=u/p+v/q\ge2$, gives
	\begin{align*}
	\bigl(S_n^{[u]}\bigr)^{1/p}\bigl(S_n^{[v]}\bigr)^{1/q}\ge S_n^{[\alpha]}\ge\frac1{n^{\alpha/2-1}}\bigl(S_n^{[2]}\bigr)^{\alpha/2}\ge S_n^{[2]}\ge\bigl(S_n^{[\alpha]}\bigr)^{2/\alpha}.
	\end{align*}
	Substituting $S_n^{[2]}=D_k(n)$ gives Theorem \ref{thm:main1}(i).
	
	\item Theorem \ref{thm:djp6}(ii), with $\beta=2$ and $0\le\alpha<2$, gives $(S_n^{[u]})^{1/p}(S_n^{[v]})^{1/q}\ge S_n^{[\alpha]}\ge (S_n^{[2]})^{\alpha/2}$. Substituting $S_n^{[2]}=D_k(n)$ gives Theorem \ref{thm:main1}(ii).
	
	\item Theorem \ref{thm:djp7}(i), with $0<p<1$ and $\alpha=u/p+v/q\ge2$, gives $$(S_n^{[u]})^{1/p}(S_n^{[v]})^{1/q}\le S_n^{[\alpha]}\le (S_n^{[2]})^{\alpha/2}.$$ 
	Note that this part of Theorem \ref{thm:djp7} does not require $x_i\ge1$. Substituting $S_n^{[2]}=D_k(n)$ gives Theorem \ref{thm:main1}(iii).
	
\item Theorem \ref{thm:djp7}(ii), with $x_i\ge1$ (verified above), $0<p<1$, $\beta=2$, and $0<\alpha<2$, gives
\begin{align*}
	\bigl(S_n^{[u]}\bigr)^{1/p}\bigl(S_n^{[v]}\bigr)^{1/q}\le S_n^{[\alpha]}\le\frac1{n^{\alpha/2-1}}\bigl(S_n^{[2]}\bigr)^{\alpha/2}\le S_n^{[2]}\le\bigl(S_n^{[\alpha]}\bigr)^{2/\alpha}.
\end{align*}
Substituting $S_n^{[2]}=D_k(n)$ gives Theorem \ref{thm:main1}(iv) for $0<\alpha<2$; the endpoint $\alpha=0$ is excluded because the terminal exponent $2/\alpha$ is undefined there, exactly as already noted in the statement of Theorem \ref{thm:main1}. \qed
\end{enumerate}

\subsection{Proof of Theorem \ref{thm:main2}}

We apply Theorem \ref{thm:djp8} with $x_i=L_{k,i}$ and $\alpha=u/p+v/q=2$, so that $S_n^{[\alpha]}(x)=S_n^{[2]}(x)=D_k(n)$ by Theorem \ref{thm:identities}.

\paragraph{The hypothesis $0<m<M$.} Write $r=(u-v)/p$; since $u\ne v$ and $p\ne0$, $r\ne0$. The map $t\mapsto t^r$ is then strictly monotonic (increasing if $r>0$, decreasing if $r<0$) on $(0,\infty)$. By Lemma~\ref{lem:monotone}, $i\mapsto L_{k,i}$ is strictly increasing on $\{1,\dots,n\}$; composing with $t\mapsto t^r$, the sequence $i\mapsto L_{k,i}^r$ is strictly monotonic on $\{1,\dots,n\}$, and since $n\ge2$ this sequence takes at least two distinct values. A strictly monotonic sequence on a finite index set attains its minimum and maximum at the two endpoints $i=1,n$, and these two values are distinct. Hence
\[
m=\min\bigl\{L_{k,1}^r,L_{k,n}^r\bigr\}<\max\bigl\{L_{k,1}^r,L_{k,n}^r\bigr\}=M,
\]
which is exactly the definition of $m,M$ in Theorem \ref{thm:main2}, with $0<m<M$ as required by Theorem~\ref{thm:djp8}.

\paragraph{Derivation of the $\lambda$-inequality.} We now derive the second inequality of Theorem~\ref{thm:djp8}(ii) from the first, rather than simply citing it, since the constant $\lambda$ requires care. Set $A=M-m>0$ and $B=mM^p-Mm^p=mM(M^{p-1}-m^{p-1})>0$ (positive since $p>1$ and $0<m<M$). These are exactly the coefficients in Theorem~\ref{thm:djp8}(i). For $p>1$ and $q=p/(p-1)$, weighted AM-GM ($\tfrac1p+\tfrac1q=1$) gives, for any $a,b>0$, $\tfrac{a}{p}+\tfrac{b}{q}\ge a^{1/p}b^{1/q}$. Taking $a=pA\,S_n^{[u]}(x)$ and $b=qB\,S_n^{[v]}(x)$,
\[
A\,S_n^{[u]}(x)+B\,S_n^{[v]}(x)\ \ge\ (pA)^{1/p}(qB)^{1/q}\bigl(S_n^{[u]}(x)\bigr)^{1/p}\bigl(S_n^{[v]}(x)\bigr)^{1/q}.
\]
Combining with Theorem~\ref{thm:djp8}(i), $A\,S_n^{[u]}(x)+B\,S_n^{[v]}(x)\le(M^p-m^p)S_n^{[\alpha]}(x)$, and rearranging,
\[
\bigl(S_n^{[u]}(x)\bigr)^{1/p}\bigl(S_n^{[v]}(x)\bigr)^{1/q}\ \le\ \frac{M^p-m^p}{(pA)^{1/p}(qB)^{1/q}}\,S_n^{[\alpha]}(x)=\lambda\,S_n^{[\alpha]}(x),
\]
with $\lambda=|M^p-m^p|\,|p(M-m)|^{-1/p}\,|q(mM^p-Mm^p)|^{-1/q}$ exactly as stated (the absolute values are redundant here since $M>m>0$, but keep the statement uniform with the $0<p<1$ case). 
Substituting $m,M$ and $S_n^{[\alpha]}(x)=D_k(n)$ into Theorem \ref{thm:djp8}(i)-(ii) (for $p>1$; the case $0<p<1$ reverses both inequalities directly by the corresponding clause of Theorem \ref{thm:djp8}) gives precisely the two inequalities of Theorem \ref{thm:main2}. \qed

\subsection{Proof of Theorem \ref{thm:main3}}

We apply Theorem \ref{thm:djp9} with $x_i=L_{k,i}L_{k,i+1}$, $i=1,\dots,n$, and $u,v$ with $u+v=2$, so that $\alpha=u/2+v/2=1$ and, by Theorem \ref{thm:identities}, $S_n^{[\alpha]}(x)=S_n^{[1]}(x)=\sum_{i=1}^nx_i=\sum_{i=1}^nL_{k,i}L_{k,i+1}=P_k(n)$.

By Lemma \ref{lem:monotone}, $(x_i)_{i\ge1}=(L_{k,i}L_{k,i+1})_{i\ge1}$ is strictly increasing. Exactly as in the proof of Theorem \ref{thm:main2}, for any fixed real exponent $r$ the sequence $i\mapsto x_i^r$ is monotonic on $\{1,\dots,n\}$ and so attains its extrema at $i=1,n$; applying this with $r=u/2$ and $r=v/2$ gives
\begin{align*}
m_1&=\min\{x_1^{u/2},x_n^{u/2}\},\qquad M_1=\max\{x_1^{u/2},x_n^{u/2}\},\\
m_2&=\min\{x_1^{v/2},x_n^{v/2}\},\qquad M_2=\max\{x_1^{v/2},x_n^{v/2}\},
\end{align*}
matching the constants in Theorem \ref{thm:main3}. Substituting these and $S_n^{[1]}(x)=P_k(n)$ into the four inequalities of Theorem \ref{thm:djp9} gives precisely the four inequalities of Theorem \ref{thm:main3}. \qed

\section{Cross-Family Bound and Further Identities}\label{sect4}

\subsection{Proof of Theorem~\ref{thm:main4}}

Applying the Cauchy-Schwarz inequality (the case $p=q=2$ of H\"older's inequality, recalled in Section \ref{sec:background}) to the two sequences $(F_{k,i})_{i=1}^n$ and $(L_{k,i})_{i=1}^n$, we get
\[
\Bigl(\sum_{i=1}^nF_{k,i}L_{k,i}\Bigr)^2\le\Bigl(\sum_{i=1}^nF_{k,i}^2\Bigr)\Bigl(\sum_{i=1}^nL_{k,i}^2\Bigr).
\]
By the cross-identity of Theorem~\ref{thm:identities}, $F_{k,i}L_{k,i}=F_{k,2i}$ for every $i\ge1$, so the left-hand side equals $\bigl(\sum_{i=1}^nF_{k,2i}\bigr)^2$. By the first two identities of Theorem~\ref{thm:identities}, $\sum_{i=1}^nF_{k,i}^2=F_{k,n}F_{k,n+1}/k$ and $\sum_{i=1}^nL_{k,i}^2=D_k(n)$. Substituting gives
\begin{align*}
\Bigl(\sum_{i=1}^nF_{k,2i}\Bigr)^2\le\frac{F_{k,n}F_{k,n+1}}{k}\cdot D_k(n),
\end{align*}
which is the inequality of Theorem~\ref{thm:main4}.

\paragraph{Equality case.} Equality in Cauchy-Schwarz holds if and only if $(F_{k,1},\dots,F_{k,n})$ and $(L_{k,1},\dots,L_{k,n})$ are proportional. For $n=1$ this holds trivially (a single pair of numbers is always proportional), giving equality. For $n\ge2$, proportionality would in particular force $F_{k,1}/L_{k,1}=F_{k,2}/L_{k,2}$. Since $F_{k,1}=1$, $L_{k,1}=k$, $F_{k,2}=k$, and $L_{k,2}=kL_{k,1}+L_{k,0}=k^2+2$, this equality reads
\begin{align*}
\frac1k=\frac{k}{k^2+2},
\end{align*}
i.e.\ $k^2+2=k^2$, which is false for every $k\ge1$. Hence the two sequences are never proportional when $n\ge2$, and the inequality is strict. This proves equality holds if and only if $n=1$. \qed

\subsection{Further $k$-Fibonacci and $k$-Lucas identities}

The identities of Theorem \ref{thm:identities} are not the only closed forms available for $k$-Fibonacci/$k$-Lucas power sums at $\beta=1$. We record six more, all provable by the same reindex-and-subtract technique used throughout Section \ref{sect3}, and all reducing at $k=1$ to classical Fibonacci/Lucas sum identities.

\begin{theorem}\label{thm:further}
	For every positive integer $k$ and every $n\ge1$,
	\begin{align}
		\sum_{i=1}^nF_{k,i}&=\frac{F_{k,n}+F_{k,n+1}-1}{k}, \label{eq:sumF}\\
		\sum_{i=1}^nL_{k,i}&=\frac{L_{k,n}+L_{k,n+1}-k-2}{k}, \label{eq:sumL}\\
		\sum_{i=1}^nF_{k,2i-1}&=\frac{F_{k,2n}}{k}, \label{eq:sumFodd}\\
		\sum_{i=1}^nF_{k,2i}&=\frac{F_{k,2n+1}-1}{k}, \label{eq:sumFeven}\\
		\sum_{i=1}^nL_{k,2i-1}&=\frac{L_{k,2n}-2}{k}, \label{eq:sumLodd}\\
		\sum_{i=1}^nL_{k,2i}&=\frac{L_{k,2n+1}-k}{k}. \label{eq:sumLeven}
	\end{align}
	At $k=1$, \eqref{eq:sumF}--\eqref{eq:sumLeven} reduce respectively to the classical identities $\sum_{i=1}^nF_i=F_{n+2}-1$, $\sum_{i=1}^nL_i=L_{n+2}-3$, $\sum_{i=1}^nF_{2i-1}=F_{2n}$, $\sum_{i=1}^nF_{2i}=F_{2n+1}-1$, $\sum_{i=1}^nL_{2i-1}=L_{2n}-2$, and $\sum_{i=1}^nL_{2i}=L_{2n+1}-1$.
\end{theorem}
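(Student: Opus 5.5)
The plan is the same reindex-and-telescope technique used for the sum of squares in the proof of Theorem~\ref{thm:identities}. Everything rests on rearranging the defining recurrence as in Lemma~\ref{lem:step}, into
\[
G_{k,j}=\frac1k\bigl(G_{k,j+1}-G_{k,j-1}\bigr),\qquad j\ge1,
\]
where $G$ stands for either $F$ or $L$. Summing this over the appropriate index set makes the right-hand side telescope. The only data needed afterwards are the initial values $F_{k,0}=0$, $F_{k,1}=1$, $L_{k,0}=2$, $L_{k,1}=k$.

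First I would handle the two sums over odd and even indices, since each telescopes in a single chain.
\begin{itemize}
\item \eqref{eq:sumFodd} and \eqref{eq:sumLodd}: take $j=2i-1$, so each term is $\frac1k(G_{k,2i}-G_{k,2i-2})$. Summing over $i=1,\dots,n$ gives $\frac1k(G_{k,2n}-G_{k,0})$. This equals $F_{k,2n}/k$ for $G=F$, and $(L_{k,2n}-2)/k$ for $G=L$.
\item \eqref{eq:sumFeven} and \eqref{eq:sumLeven}: take $j=2i$. The sum telescopes to $\frac1k(G_{k,2n+1}-G_{k,1})$. This equals $(F_{k,2n+1}-1)/k$ and $(L_{k,2n+1}-k)/k$ respectively.
\end{itemize}
This matches the computation already carried out for $\sum L_{k,2i+1}$ in the proof of Theorem~\ref{thm:identities}, so it should be routine.

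For the full sums \eqref{eq:sumF} and \eqref{eq:sumL}, I would split $\sum_{i=1}^{n}G_{k,i}$ into its odd- and even-indexed parts. Alternatively, and more cleanly, I would sum $kG_{k,i}=G_{k,i+1}-G_{k,i-1}$ directly over $i=1,\dots,n$. This is a step-two telescope, leaving two boundary terms at each end:
\[
k\sum_{i=1}^nG_{k,i}=G_{k,n+1}+G_{k,n}-G_{k,1}-G_{k,0}.
\]
Substituting the initial values gives $F_{k,n}+F_{k,n+1}-1$ and $L_{k,n}+L_{k,n+1}-k-2$. The main (mild) obstacle is exactly this bookkeeping: making sure the two surviving terms at each end are correct. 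I would confirm the count by checking the case $n=1$, for example $kF_{k,1}=F_{k,2}+F_{k,1}-1$, which reads $k=k+1-1$.

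Finally, the reductions at $k=1$ use the recurrence together with standard small values.
\begin{itemize}
\item $F_n+F_{n+1}=F_{n+2}$ and $L_n+L_{n+1}=L_{n+2}$. With $k+2=3$ this gives the first two classical identities.
\item The odd and even sums specialize verbatim, since dividing by $k=1$ changes nothing and $L_1=1$.
\end{itemize}
No Binet formula is needed anywhere.
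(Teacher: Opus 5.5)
Your proposal is correct and follows essentially the same route as the paper. For the full sums you sum $kG_{k,i}=G_{k,i+1}-G_{k,i-1}$ over $i=1,\dots,n$, which the paper phrases as reindex-and-subtract; for the odd- and even-index sums you apply the step identity at $j=2i-1$ and $j=2i$, so they telescope to $G_{k,2n}-G_{k,0}$ and $G_{k,2n+1}-G_{k,1}$; and the $k=1$ reductions via $G_n+G_{n+1}=G_{n+2}$ match the paper.
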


\begin{proof}
	\emph{Identity \eqref{eq:sumF}.} By Lemma \ref{lem:step}, $F_{k,i+1}-F_{k,i-1}=kF_{k,i}$ for every $i\ge1$. Summing over $i=1,\dots,n$,
	\begin{align*}
	k\sum_{i=1}^nF_{k,i}=\sum_{i=1}^nF_{k,i+1}-\sum_{i=1}^nF_{k,i-1}=\sum_{j=2}^{n+1}F_{k,j}-\sum_{j=0}^{n-1}F_{k,j}.
	\end{align*}
	Writing $T=\sum_{i=1}^nF_{k,i}$, the first sum on the right is $T-F_{k,1}+F_{k,n+1}=T-1+F_{k,n+1}$ and the second is $F_{k,0}+T-F_{k,n}=T-F_{k,n}$ (using $F_{k,0}=0$). Subtracting,
	\begin{align*}
	k T=\bigl(T-1+F_{k,n+1}\bigr)-\bigl(T-F_{k,n}\bigr)=F_{k,n}+F_{k,n+1}-1,
\end{align*}
	which gives \eqref{eq:sumF}.
	
	\emph{Identity \eqref{eq:sumL}.} Identically, with $L_{k,i+1}-L_{k,i-1}=kL_{k,i}$ and $L_{k,0}=2$, $L_{k,1}=k$, the same reindex-and-subtract argument gives $kT_L=L_{k,n}+L_{k,n+1}-L_{k,0}-L_{k,1}=L_{k,n}+L_{k,n+1}-k-2$, where $T_L=\sum_{i=1}^nL_{k,i}$, proving \eqref{eq:sumL}.
	
	\emph{Identities \eqref{eq:sumFodd}--\eqref{eq:sumLeven}.} These follow from Lemma~\ref{lem:step} applied at even or odd index, exactly as in the proof of the third identity of Theorem~\ref{thm:identities}. Applying Lemma~\ref{lem:step} with $i\mapsto2i-1$ gives $F_{k,2i}-F_{k,2i-2}=kF_{k,2i-1}$ for every $i\ge1$; summing over $i=1,\dots,n$ telescopes directly to
\begin{align*}
	k\sum_{i=1}^nF_{k,2i-1}=\sum_{i=1}^n\bigl(F_{k,2i}-F_{k,2i-2}\bigr)=F_{k,2n}-F_{k,0}=F_{k,2n},
\end{align*}
	which gives \eqref{eq:sumFodd}. Applying Lemma~\ref{lem:step} with $i\mapsto2i$ gives $F_{k,2i+1}-F_{k,2i-1}=kF_{k,2i}$; summing and telescoping,
	\begin{align*}
	k\sum_{i=1}^nF_{k,2i}=F_{k,2n+1}-F_{k,1}=F_{k,2n+1}-1,
\end{align*}
	which gives \eqref{eq:sumFeven}. The same two substitutions applied to $L_{k,i+1}-L_{k,i-1}=kL_{k,i}$ give
	\begin{align*}
	k\sum_{i=1}^nL_{k,2i-1}=L_{k,2n}-L_{k,0}=L_{k,2n}-2,\qquad k\sum_{i=1}^nL_{k,2i}=L_{k,2n+1}-L_{k,1}=L_{k,2n+1}-k,
\end{align*}
	which give \eqref{eq:sumLodd} and \eqref{eq:sumLeven} respectively.
	
	The reductions at $k=1$ follow by direct substitution, using $F_{1,n+2}=F_{1,n}+F_{1,n+1}$ and $L_{1,n+2}=L_{1,n}+L_{1,n+1}$ to rewrite \eqref{eq:sumF}--\eqref{eq:sumL} in the stated classical form.
\end{proof}

Identity \eqref{eq:sumFeven} makes the cross-family bound of Theorem~\ref{thm:main4} fully explicit: the left-hand side is no longer an unevaluated sum but a closed form in $k,n$.

\begin{cor}\label{cor:explicit}
	For every $k\ge1$ and $n\ge1$,
	\[
	\left(\frac{F_{k,2n+1}-1}{k}\right)^2\le\frac{F_{k,n}F_{k,n+1}}{k}\cdot D_k(n).
	\]
\end{cor}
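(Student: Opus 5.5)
The corollary follows by substituting a closed form into the left-hand side of Theorem~\ref{thm:main4}; nothing new needs to be estimated. I will start from the inequality of Theorem~\ref{thm:main4},
\[
\Bigl(\sum_{i=1}^nF_{k,2i}\Bigr)^2\le\frac{F_{k,n}F_{k,n+1}}{k}\cdot D_k(n),
\]
which holds for every $k\ge1$ and $n\ge1$. It was obtained from Cauchy--Schwarz applied to $(F_{k,i})$ and $(L_{k,i})$, together with the cross-identity $F_{k,i}L_{k,i}=F_{k,2i}$ and the two sum-of-squares identities of Theorem~\ref{thm:identities}.

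Next I will evaluate the inner sum with identity \eqref{eq:sumFeven} of Theorem~\ref{thm:further}, which gives
\[
\sum_{i=1}^nF_{k,2i}=\frac{F_{k,2n+1}-1}{k}.
\]
Substituting this closed form into the left-hand side of Theorem~\ref{thm:main4} yields the displayed inequality of the corollary directly.

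No step needs any sign or positivity check, because both sides are unchanged by the substitution. It is an identity, and the square is taken of the same quantity.

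The only point needing care is to confirm that \eqref{eq:sumFeven} holds for every $n\ge1$, including $n=1$. At $n=1$ the left side is $F_{k,2}=k$. The right side is $(F_{k,3}-1)/k=(k^2+1-1)/k=k$, so the two agree.

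The remainder is the telescoping argument already given in the proof of Theorem~\ref{thm:further}. As a byproduct, the equality case transfers as well: equality in the corollary holds exactly when $n=1$, by Theorem~\ref{thm:main4}.
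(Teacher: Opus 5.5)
Your proposal is correct and is exactly the paper's proof: substitute the closed form \eqref{eq:sumFeven} for $\sum_{i=1}^nF_{k,2i}$ into the left-hand side of Theorem~\ref{thm:main4}. Your $n=1$ check of \eqref{eq:sumFeven} and your remark that equality holds exactly when $n=1$ are valid extras. The paper uses that equality transfer only at $k=1$, in Corollary~\ref{cor:k1cross}.
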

\begin{proof}
Substitute \eqref{eq:sumFeven} into Theorem \ref{thm:main4}.
\end{proof}

\section{Concluding Remarks}\label{sect5}

We have shown that the H\"older/Cauchy power-sum machinery of Dujella, Jak\v seti\'c and Pe\v cari\'c \cite{DJP}, originally developed for the Fibonacci sequence alone, extends uniformly across the one-parameter family of $k$-Fibonacci and $k$-Lucas numbers of Falc\'on and Plaza. The extension rests on four load-bearing identities (Theorem \ref{thm:identities}), six further linear power-sum identities (Theorem \ref{thm:further}), and a monotonicity lemma (Lemma~\ref{lem:monotone}) that lets the converse-H\"older and Cauchy-conversion constants be read off at the endpoints $i=1,n$ for every $k$ simultaneously, rather than being re-derived case by case.

\subsection{Specializations at $k=1$}

Since $D_1(n)=L_nL_{n+1}-2$ and $P_1(n)=L_{n+1}^2+\tfrac{5(-1)^n-7}{2}$ (Theorem \ref{thm:identities}), setting $k=1$ in Theorems \ref{thm:main1}-\ref{thm:main3} recovers, verbatim, the ordinary-Lucas analogues of \cite[Theorems 10-12]{DJP}; the same four-part H\"older sandwich, converse-H\"older refinement, and Cauchy-conversion refinement that \cite{DJP} established for the Fibonacci sequence, now proved instead for the Lucas sequence, with $L_nL_{n+1}-2$ playing the role that $F_nF_{n+1}$ plays in \cite{DJP}. To the best of our knowledge these Lucas-sequence chains have not previously been recorded in this form. The work in  \cite{DJP} treats only the Fibonacci case, and the Lucas case does not follow from it by any direct substitution, since the identities $\sum L_i^2=L_nL_{n+1}-2$ and $\sum L_iL_{i+1}=L_{n+1}^2+\tfrac{5(-1)^n-7}{2}$ are not simply the Fibonacci identities \eqref{eq:fib-identities} with $F$ relabeled as $L$.

The cross-family bound specializes to a fully explicit, elementary inequality relating ordinary Fibonacci and Lucas numbers.

\medskip

\begin{cor}\label{cor:k1cross}
	For every $n\ge1$,
	\[
	(F_{2n+1}-1)^2\le F_nF_{n+1}\bigl(L_nL_{n+1}-2\bigr),
	\]
	with equality if and only if $n=1$.
\end{cor}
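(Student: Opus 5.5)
The plan is to specialize Corollary~\ref{cor:explicit} and Theorem~\ref{thm:main4} to $k=1$ and then transport the equality characterization.

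First, at $k=1$ we have $F_{1,m}=F_m$ and $L_{1,m}=L_m$ for all $m$, since the recurrences and initial values coincide. By Theorem~\ref{thm:identities}, $D_1(n)=L_nL_{n+1}-2$. Setting $k=1$ in Corollary~\ref{cor:explicit} therefore gives
\[
(F_{2n+1}-1)^2\le F_nF_{n+1}\bigl(L_nL_{n+1}-2\bigr)
\]
directly, because the denominators $k$ all become $1$.

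For the equality statement, Corollary~\ref{cor:explicit} alone records no equality case, so I would return to Theorem~\ref{thm:main4}. By identity \eqref{eq:sumFeven} at $k=1$, $\sum_{i=1}^nF_{2i}=F_{2n+1}-1$. Hence the left-hand side of the corollary is exactly $\bigl(\sum_{i=1}^nF_{1,2i}\bigr)^2$, and the right-hand side is exactly $\frac{F_{1,n}F_{1,n+1}}{1}D_1(n)$. The inequality is thus literally the $k=1$ case of Theorem~\ref{thm:main4}, and that theorem's equality clause transfers verbatim: equality holds if and only if $n=1$.

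As a sanity check at $n=1$, the left side is $(F_3-1)^2=1$ and the right side is $F_1F_2(L_1L_2-2)=1\cdot(3-2)=1$. At $n=2$, the left side is $(5-1)^2=16$ and the right side is $1\cdot2\cdot(3\cdot4-2)=20$, so the inequality is strict, as claimed.

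The only step needing care is confirming that the equality case of Theorem~\ref{thm:main4} applies at $k=1$. Its non-proportionality argument, $1/k\neq k/(k^2+2)$, holds for every $k$, including $k=1$, where it reads $1\ne 1/3$. So I expect no real obstacle; the proof is pure substitution.
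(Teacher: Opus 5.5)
Your proposal is correct and follows the paper's own proof: it specializes Corollary~\ref{cor:explicit} to $k=1$ using $D_1(n)=L_nL_{n+1}-2$. It then takes the equality case as the $k=1$ instance of Theorem~\ref{thm:main4}, where the identification through \eqref{eq:sumFeven} and your numerical checks at $n=1,2$ are also correct.
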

\begin{proof}
	Set $k=1$ in Corollary~\ref{cor:explicit}, using $D_1(n)=L_nL_{n+1}-2$; the equality case is the $k=1$ instance of Theorem~\ref{thm:main4}.
\end{proof}

For $n\ge2$ the inequality is strict; for example, the ratio of the two sides is $1.25$ at $n=2$, $1.033$ at $n=4$, and $1.001$ at $n=8$. We do not prove that this ratio is monotonic or that it tends to $1$; the examples above only illustrate that it is close to $1$ and small at these values of $n$.

\subsection{A second instance: $k=2$}

The family is not merely a formal device. At $k=2$, $F_{2,\cdot}$ and $L_{2,\cdot}$ are the classical Pell and Pell-Lucas sequences, $0,1,2,5,12,29,70,\dots$ and $2,2,6,14,34,82,198,\dots$, and Theorems~\ref{thm:main1}-\ref{thm:main4} apply to them exactly as stated, with $D_2(n)=\tfrac{L_{2,n}L_{2,n+1}-4}{2}$ in place of $D_k(n)$. Every inequality proved in this paper is therefore simultaneously a statement about Fibonacci and Lucas numbers ($k=1$), Pell and Pell--Lucas numbers ($k=2$), and every further $k$-Fibonacci/$k$-Lucas pair, from a single unified proof.

\subsection{Further directions}

Two natural extensions remain open. First, the further identities of Theorem~\ref{thm:further} are linear ($\beta=1$) sums; the original Fibonacci paper \cite{DJP} lists several higher-order closed forms (weighted binomial sums, products such as $F_iF_{3i}$) that were left unused even in the Fibonacci case, and it would be of interest to determine which of these admit $k$-Fibonacci/$k$-Lucas analogues by the same telescoping technique. Second, the $k$-Fibonacci and $k$-Lucas numbers are themselves a special case ($P=k,\,Q=-1$) of the general Horadam/Lucas sequences $U_n(P,Q)$, $V_n(P,Q)$ satisfying $U_n=PU_{n-1}-QU_{n-2}$. The step identity of Lemma \ref{lem:step} extends immediately to this setting, but the monotonicity of Lemma \ref{lem:monotone} does not: our proof uses $L_{k,i}>0$ for all $i\ge0$, which holds because $k\ge1$, and need not hold for general $P,Q$ or general initial conditions, where terms can change sign or fail to be monotonic. It would be of interest to investigate whether analogues of Theorems \ref{thm:identities}-\ref{thm:main4} extend to Horadam sequences under explicit conditions on $P,Q$ and the initial terms that guarantee positivity and monotonicity of the resulting sequences, which would subsume the present results at $(P,Q)=(k,-1)$.

\section*{Acknowledgments}
The first author is deeply grateful to his supervisor, the late Professor Florian Luca, whose foundational contributions to the study of linear recurrences shaped the direction of this work, and to whose memory this paper is dedicated. The first author also thanks the Mathematics Division of Stellenbosch University for funding his PhD studies.

\section*{Disclosure Statement}
The corresponding author reports there are no competing interests to declare.

\section*{Funding}
The first author's doctoral studies were funded by the Mathematics Division of Stellenbosch University and no specific grant was awarded for this research.

	\section*{Addresses}
	$ ^{1} $ Mathematics Division, Stellenbosch University, Stellenbosch, South Africa.
	
	Email: \url{hbatte91@gmail.com}
	
	$ ^{2} $ Department of Mathematics, Makerere University, Kampala, Uganda.
	
	Email: \url{kaggwaprosper58@gmail.com}
\end{document}